\numberwithin{equation}{section}
\theoremstyle{plain}
\newtheorem{theorem}{Theorem}[section]
\newtheorem{lemma}{Lemma}[section]
\newtheorem{definition}{Definition}[section]
\theoremstyle{definition}
\newtheorem{remark}{Remark}[section]
\newcommand{\Rmnum}[1]{\expandafter\@slowromancap\romannumeral #1@}
\begin{document}

\begin{frontmatter}

\title{Multivariate Density Estimation via Adaptive Partitioning (II): Posterior Concentration}
\runtitle{Density Estimation Via Adaptive Partitioning}

\begin{aug}
\author{\fnms{Linxi} \snm{Liu}\thanksref{t2,m1}\ead[label=e1]{linxiliu@stanford.edu}}
\and
\author{\fnms{Wing Hung} \snm{Wong}\thanksref{t2,m1,m2}\ead[label=e2]{whwong@stanford.edu}}
%\and
%\author{\fnms{Third} \snm{Author}\thanksref{t1,m2}
%\ead[label=e3]{third@somewhere.com}
%\ead[label=u1,url]{http://www.foo.com}}

%\thankstext{t1}{Some comment}
%\thankstext{t2}{Supported by an Albion Walter Hewlett Stanford Graduate Fellowship.}
\thankstext{t2}{Supported by NIH grant R01GM109836, and NSF grants DMS1330132 and DMS1407557.}
\runauthor{L. Liu and W. H. Wong}

\affiliation{Department of Statistics, Stanford University\thanksmark{m1}\\Department of Health Research and Policy, Stanford University\thanksmark{m2}}

\address{Department of Statistics\\
Stanford University\\
390 Serra Mall, Sequoia Hall\\
Stanford, California 94305\\
USA\\
%\printead{e1} \\
%\phantom{E-mail:\ }\printead*{e2}
}

\end{aug}

\begin{abstract}
In this paper, we study a class of non-parametric density estimators
under Bayesian settings. The
estimators are piecewise constant
functions on binary
partitions. We analyze the concentration rate of the
posterior distribution under a suitable prior, and demonstrate that the
rate does not directly depend on the dimension of the
problem. This paper can be viewed as an extension of
(\cite{LW2014}) where the convergence rate of a related sieve
MLE was established. Compared to the sieve MLE, the main
advantage of the Bayesian method is that it can adapt to
the unknown complexity of the true density function, thus achieving the
optimal convergence rate without artificial conditions on the density. 

\end{abstract}

\begin{keyword}[class=MSC]
\kwd[Primary ]{62G20, }
\kwd[secondary ]{62H10.}
\end{keyword}

\begin{keyword}
\kwd{density estimation}
%\kwd{Bayesian nonparametrics}
\kwd{posterior concentration rate}
\kwd{adaptive partitioning.}
\end{keyword}

\end{frontmatter}

\section{Introduction}
In this paper, we study the asymptotic behavior of posterior
distributions of a class of density
estimators based on adaptive partitioning. Density estimation is a
fundamental problem in statistics---once an explicit estimate
of the density function is obtained, various kinds of
statistical inference can follow, including nonparametric testing, clustering,
and data compression. 

With univariate (or bivariate) data, the most basic non-parametric
method for density estimation is the histogram method.  In this
method, the sample space is partitioned into regular intervals (or
rectangles), and the density is estimated by the relative frequency of data
points falling into each interval (rectangle). However, this method is
of limited utility in higher dimensional spaces because the number of
cells in a regular partition of a $p$-dimensional space will grow
exponentially with $p$, which makes the relative frequency highly
variable unless the sample size is extremely large.  In this situation
the histogram may be improved by adapting the partition to the data so
that larger rectangles are used in the part of the sample space where
data is sparse. Motivated by this consideration, researchers
have recently developed several
multivariate density estimation methods based on
adaptive partitioning. For example, by generalizing the classical
Polya Tree construction (\cite{ferguson1974}), \cite{wong2010}
developed the Optional Polya Tree (OPT) prior on the space of simple
functions. In this prior the partition that supports the simple
function is generated by a random recursive partitioning process. As
the partition is random a priori, it can be inferred from its
posterior distribution once the data is observed. Computational issues
related to OPT density estimates were discussed in \cite{LJW}, where
efficient algorithms were developed to compute the OPT estimate. In \cite{LJW}, a different way to construct the random
partition is introduced where the size of the partition grows linearly
instead of geometrically as in OPT. This allows the authors to use
sequential importance sampling to sample from the posterior
distribution. This “Bayesian Sequential Partition” (BSP) method is
computationally more scalable to higher dimensions than the OPT
method. As an application, the methods were used to estimate
within-class densities in classification problems, thereby obtaining
approximations to the Bayes classifier. When tested on standard data
sets with $p$ ranging from 10-50, the results are competitive to those
from leading classification methods such as SVM and boosted tree.

The purpose of the current paper is to address the following questions
on such Bayesian density estimates based on partition learning.
Question 1: what is the class of density functions that can be “well
estimated” by these methods. Question 2: what is the rate in which the
posterior distribution is concentrated around the true density as the
sample size increases? For question 1, our analysis will make use of
some results from a companion paper \cite{LW2014} on the properties of
sieve MLEs where the sieve is constructed by considering simple
functions supported by binary partitions of growing
sizes. Specifically, \cite{LW2014} showed that if the true density can
be approximated in Hellinger distance at a rate of $I^{-r}$ where $I$ is the
size of the partition, then the convergence rate of the sieve-MLE
density estimate is $O(n^{-r/(2r+1)} )$ up to $\log n$ terms, where $n$
is the sample size. We note that the term ``well estimated'' in question
1 can now be given a more specific meaning, namely that the
convergence rate of the estimate should not deteriorate fast when the
dimension $p$ of the sample space is large. \cite{LW2014} gave examples
of functions for which approximation rate $I^{-r}$ is not affected by
$p$ much. These include functions satisfying mixed-H\"older continuity
conditions or functions with spatial sparsity as characterized by fast
decay of Haar wavelet coefficients. It is well known that sieve MLEs
are closely related to penalized estimates which is in turn related to
Bayesian methods (\cite{wahba1978}, \cite{shen1997} and \cite{shen2001}). Thus we expect
that the class of density well estimated by the Bayesian methods
should be the same class analyzed by \cite{LW2014}, i.e. the class of
densities that can be approximated at rate $I^{-r}$ for some $r>0$.
We will see that this is indeed true as a consequence of our main
result. Our main result (Theorem 2.1) also provides the answer to the
second question: it shows that the posterior probability is
concentrated in a shrinking Hellinger ball around the true density,
where the radius of the ball is $O(n^{-r/(2r+1)} )$ up to $\log n$
terms. 

Although the convergence rate of the Bayesian method matches that of
the sieve MLE, there is an important difference. While this rate is
achieved by the Bayesian method without requiring any knowledge of the
constant $r$ that characterizes the complexity of the true density
function, the sieve MLE can achieve this same rate only if the size of
the sieve grows at a rate that depends on $r$, specifically, the size of
the partition must be of order $n^{- 1/(2r+1)}$. In other words, the
Bayesian estimate is adaptive to the complexity of the true density
while the sieve MLE is not.  This is an important difference in
practice.

We now briefly review previous literature
on convergence rate of posterior distributions. In breakthrough works \cite{ghosal00}
and \cite{shen2001}, the authors developed general theory on posterior
convergence rates and discussed several applications. Following this theory,
most results have focused on mixture models (\cite{lo1984} and
\cite{ferguson1983}), because these models allow the study of smooth
density functions. Some elegant works include \cite{ghosal2001} and
\cite{ghosal2007}, which studied the concentration rate of the posterior
distribution under Dirichlet mixtures of Gaussian priors, and
\cite{ghosal2001Bern} and \cite{rousseau2010}, which examined the
posterior concentration rate under the mixtures of Beta
priors. Compared to the previous literature, one major improvement of
our result is
that it can deal with multivariate cases. In particular, the rate
attained by our estimate is independent of the dimension $p$, if the
true density falls within the support of the prior. When specialized to
the univariate case, it still coincides with the previous
results. For instance, for one dimensional H\"older space with
parameters between $0$ and $1$, our result is minimax up to a $\log n$ term. Another contribution is
that our result can adapt to the unknown
complexity of the density function.  There has been few adaptive rate
results for Bayesian density estimates in the literature (see
\cite{hoffman2015} for a more extensive review of recent results on
adaptive posterior concentration rates). A notable
exception is in \cite{rousseau2010}, where the
author obtained adaptive posterior concentration rates for one-dimensional H\"older spaces under
mixture Beta priors. Here, our result can adapt to a broader range of
density functions, including spatially sparse density functions, H\"older
continuous functions, and functions of bounded variation. We gain this
advantage at a cost of relatively poor performance for functions with
higher order smoothness. It is our belief that in the multivariate
case, smoothness is not the best condition to characterize functions
that can be well estimates. The rate under usual smoothness condition
is $n^{-(\kappa/(2 \kappa+p))}$ (\cite{stone1980}), where $\kappa$ is the number of
derivatives. Thus high order smoothness cannot guarantee good
convergence when $p$ is large. 

The article is organized as follows. In Section \ref{sec:notation} we
define the prior distribution and summarize our main results on
posterior concentration rate. We express the posterior measure of the
complement of a Hellinger ball as a ratio, where the numerator is the product of prior
probability and the likelihood, and the denominator is the normalizing
factor. In order to derive the concentration rate, we need to upper
bound the numerator and lower bound the dominator. In Section
\ref{sec:upperbd} and Section \ref{sec:lowerbd}, we discuss these upper
and lower bounds respectively. Finally, in Section
\ref{sec:finalproof}, we combine these results to derive the
posterior concentration rate.

\section{Main results on posterior concentration rate}
\label{sec:notation}
In this paper, we focus on the density estimation problem in the
$p$-dimensional Euclidean space. Let $(\Omega, \mathcal{B})$ be a
measurable space and $f_0$ be a compactly supported
density function with respect to the Lebesgue measure $\mu$. $Y_1,
Y_2, \cdots, Y_n$ is a sequence of independent variables distributed
according to $f_0$. After
translation and scaling, we can always assume that the support of
$f_0$ is contained in the unit cube in $\mathbb{R}^p$. Translating
this into notations,
we assume that $\Omega=\{(y^1, y^2, \cdots, y^p): y^l
\in [0,1]\}$. $\mathcal{F}=\{f \mbox{ is a nonnegative measurable
  function}\\ \mbox{on }\Omega: \int_{\Omega}
f d\mu =1 \}$ denotes the collection of all the density functions on 
$(\Omega,
\mathcal{B}, \mu)$. Then $\mathcal{F}$ constitutes the parameter space in
this problem. Note that $\mathcal{F}$ is an infinite dimensional
parameter space. 

\subsection{Densities on binary partitions}
\label{subsec:sieve}
To address the infinite dimensionality of $\mathcal{F}$, we construct a sequence of
finte dimensional approximating spaces $\Theta_1, \Theta_2, \cdots,
\Theta_I, \cdots$ based on \emph{binary partitions}. With growing
complexity, these spaces provides more and more accurate
approximations to the initial parameter space $\mathcal{F}$. Here, we use a recursive procedure to define a binary partition
with $I$ subregions of the unit cube in
$\mathbb{R}^p$. Let $\Omega=\{(y^1, y^2, \cdots, y^p): y^l
\in [0,1]\}$ be the unit cube in $\mathbb{R}^p$. In the first step, we choose
one of the coordinates $y^l$ and cut $\Omega$ into two subregions along the midpoint of
the range of $y^l$. That is, $\Omega=\Omega^l_0 \cup \Omega^l_1$, where $\Omega^l_0 = \{y \in \Omega:
y^l \leq 1/2\}$ and $\Omega^l_1 = \Omega \backslash \Omega^l_0$. In this way, we get a
partition with two subregions. Note that the total number of possible partitions
after the first step is equal to the dimension $p$. Suppose after $I-1$ steps of the
recursion, we have obtained a partition
$\{ \Omega_i \}_{i=1}^I$ with $I$ subregions. In the $I$-th step, further
partitioning of the region is defined as follows: 
\begin{enumerate}
\item Choose a region from $\Omega_1, \cdots, \Omega_I$. Denote it as
  $\Omega_{i_0}$. 
\item Choose one coodinate $y^l$ and divide $\Omega_{i_0}$ into two subregions
  along the midpoint of the range of $y^l$.
\end{enumerate}
Such a partition obtained by $I-1$ recursive steps is called
a binary partition of size $I$. Figure~\ref{fig:binarypartition}
displays all possible two dimensional binary partitions when $I$ is 1, 2 and 3.

\begin{figure} % figuur 1
\label{fig:binarypartition}
\centering
\includegraphics[scale=0.4]{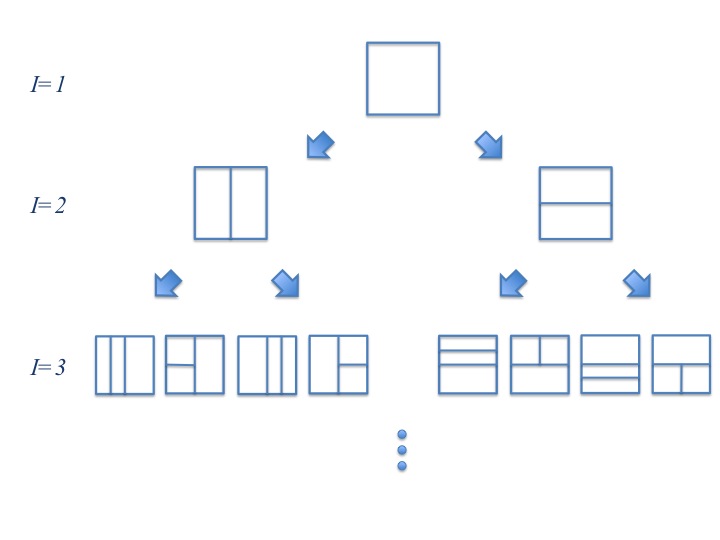}
\caption{Binary partitions}
\end{figure}

Now, let
\begin{eqnarray*}
\Theta_I = \{f \in \Theta: f=\sum_{i=1}^I \beta_i
\mathbbm{1}_{\Omega_i},\sum_{i=1}^I \beta_i \mu(\Omega_i) =1,\\
\{\Omega_i\}_{i=1}^I\ \mbox{is a\ binary\ partition\ of}\ \Omega\ \mbox{of\ size}\ I.\}.
\end{eqnarray*}
Then, $\Theta_I$ is the collection of the density functions
supported by the binary partitions of size $I$. They constitute a sequence of approximating spaces
(i.e. a sieve, see \cite{Grenander} and \cite{shen1994} for background on sieve
theory). Let $\Theta = \cup_{I=1}^\infty \Theta_I$ be the space
containing all the density functions supported by the binary
partitions. Then $\Theta$ is an approximation of the initial parameter
space $\mathcal{F}$ to certain approximation error which will be characterized later. 

% Topology: Hellinger distance
We take the metric on $\mathcal{F}$, $\Theta$ and $\Theta_I$ to be Hellinger distance, which is defined to be
\begin{equation}
\label{al:hellinger}
\rho(f,g)=(\int_{\Omega} (\sqrt{f(y)} - \sqrt{g(y)} )^2
dy)^{1/2},\ f,g \in \Theta.
\end{equation}
For $f, g \in \Theta_I$, let $f = \sum_{i=1}^I \beta_i^1
\mathbbm{1}_{\Omega_i^1}$, $g=\sum_{i=1}^I \beta_i^2
\mathbbm{1}_{\Omega_i^2}$, where $\{\Omega_i^1\}_{i=1}^I$ and $\{
\Omega_i^2 \}_{i=1}^I$ are binary partitions of $\Omega$. Then the Hellinger distance
between $f_I^1$ and $f_I^2$ can be written as
\begin{equation}
\rho^2(f, g)=\sum_{i=1}^I \sum_{j=1}^I (\sqrt{\beta_i^1} -
\sqrt{\beta_j^2})^2 \mu(\Omega_i^1 \cap \Omega_j^2).
\end{equation}

We will aso use Kullback-Leibler divergence and the variance of
the log-likelihood ratio based on a single observation $Y$, which are
defined to be 
\begin{equation}
  \label{eq:kl}
  K(f_0, f) = \mathbb{E}_{f_0} \Big( \log \frac{f_0(Y)}{ f(Y)} \Big), 
\end{equation}
and 
\begin{equation}
  \label{eq:varlog}
  V(f_0, f) = \mbox{Var}_{f_0} \Big ( \log \frac{f_0(Y)}{ f(Y)} \Big).
\end{equation}

\subsection{Approximation error}
\label{subsec:approx}
% approximatition error
The accuracy of the approximation to the true density by the elements
in $\Theta$ is
formulated in the following way. A density function $f \in \mathcal{F}$ is
said to be well approximated by elements in $\Theta$, if
there exits a sequence of $f_I \in \Theta_I$, satisfying that
$\rho(f_I, f) = O( I^{-r} )(r>0)$. This means that there
exists constant $A_1$ and $A_2$, such that $A_1 I^{-r} \leq \min_{g
  \in \Theta_I} \rho(g, f) \leq \rho(f_I,
f) \leq A_2 I^{-r}$. Let $\mathcal{F}_0$ be the collection of these
density functions. We will first derive posterior concentration rate
for the elements in $\mathcal{F}_0$ in terms of the parameter $r$. For
different function classes, this approximation rate $r$ can be
calculated explicitly. This type of results has been discussed in a
parallel paper (\cite{LW2014}). In addition to this, we also assume that
$f_0$ has finite second moment. 

We want to point out that, based on the minimaxity of the Bayes
estimator, it is necessary to restrict our attention to a subset of $\mathcal{F}$. 
In \cite{farrell1967} and \cite{birge1998}, the authors demonstrated that it
is impossible to find an estimator which works uniformly well for every
$f$ in $\mathcal{F}$. This is the case because for any estimator $\hat{f}$, there always exists
$f \in \mathcal{F}$ for which $\hat{f}$ is inconsistent.

% Likelihood functions, sieve MLE
\subsection{Prior specification}
\label{subsec:prior}
An ideal prior $\Pi$ on $\Theta= \cup_{I=1}^\infty \Theta_I$ is supposed to be capable of
balancing the approximation error and the complexity of $\Theta$. The
prior in this paper penalizes the size of the partition in the sense
that the probability mass on each $\Theta_I$ is proportional to $\exp(- \lambda I \log I)$. Given a
sample of size $n$, we restrict our attention to
$\Theta_n =\cup_{I=1}^{n} \Theta_I$, because in practice it is not meaningful to study
a partition with the number of subregions greater than the sample
size. This is to say, when $I\leq n$, $\Pi(\Theta_I) \propto \exp(
-\lambda I \log I)$, otherwise $\Pi(\Theta_I)=0$. 

If we use $T_I$ to denote the total number of possible
partitions of size $I$, then it is not hard to see that $\log T_I \leq
c^* I \log I$, where $c^*$ is a constant. Within each $\Theta_I$, the
prior is uniform across all binary partitions. In other words, let
$\{\Omega_i\}_{i=1}^I$ be a binary partition of $\Omega$ of size $I$,
and $\mathcal{F}(\{\Omega_i\}_{i=1}^I )$ is the collection of
piecewise constant density functions on this partition
(i.e. $\mathcal{F}(\{\Omega_i\}_{i=1}^I ) = \{ f = \sum_{i=1}^I
\frac{\theta_i} {|\Omega_i| }
 \mathbbm{1}_{\Omega_i}: \sum_{i=1}^I \theta_i =1 \mbox{ and } \theta_i \geq 0,
 i=1, \ldots, I\}$),
then  
\begin{equation}
  \label{eq:prioronpartition}
\Pi \left( \mathcal{F} \left( \{\Omega_i \}_{i=1}^I \right) \right)
\propto \exp(-\lambda I \log I) / T_I.
\end{equation}

Given a partition $\{\Omega_i\}_{i=1}^I$, the weights $\theta_i $ on
the subregions follow
a truncated Dirichlet distribution with parameters all equal to
$\alpha$ ($\alpha<1$). This is to say, for $ x_1, \cdots, x_I
> \tau$ and $\sum_{i=1}^I x_i =1$,  
\begin{eqnarray}
  \label{eq:prioronweight}
\nonumber  && \Pi\left(f = \sum_{i=1}^{I} \frac{\theta_i}{|\Omega_i|} \mathbbm{1}_{\Omega_i}:
\theta_1 \in dx_1,
  \cdots, \theta_I \in dx_I | f \in \mathcal{F}\left( \{\Omega_i\}_{i=1}^I \right) \right) \\
&\propto& \frac{\Gamma(\alpha I)}{(\Gamma(\alpha))^I} \prod_{i=1}^I
x_i ^{\alpha -1},
\end{eqnarray}
otherwise, the prior probability is zero. $\tau$ is the truncation
parameter. In this paper, we set $\tau$ to be $D I^{-\kappa}$ ($D, \kappa>0$).

\subsection{Posterior concentration rate}
We are interested in how fast the posterior probability measure concentrates
around the true the density $f_0$. Under the prior specified above, the posterior probability is the random measure
given by 
\begin{equation*}
  \Pi (B | Y_1, \cdots, Y_n) = \frac{\int_B \prod_{j=1}^{n} f(Y_j)
    d\Pi (f)}{ \int_{\Theta} \prod_{j=1}^n f(Y_j) d \Pi(f)}.
\end{equation*}
A Bayesian estimator is said to be \emph{consistent} if the posterior
distribution concentrates on arbitrarily small neighborhoods of
$f_0$, with probability tending to 1 under $P_0^n$ ($P_0$ is the
probability measure corresponding to the density function $f_0$). The
posterior concentration rate refers to the
rate at which these neighborhoods shrink to zero while still
possessing most of the posterior mass. More explicitly, we want to
find a sequence $\epsilon_n \rightarrow 0$, such that for sufficiently
large $M$, 
\begin{equation*}
  \Pi(f: \rho(f, f_0) \geq M \epsilon_n |Y_1, \cdots, Y_n) \rightarrow
  0\ \mbox{in}\ P_0^n-\mbox{probability}.
\end{equation*}

The following theorem gives the posterior concentration rate under the
prior probability specified in Section \ref{subsec:prior}.
\begin{theorem}
\label{thm:concentrationrate}
  $Y_1, \cdots, Y_n$ is a sequence of independent random variables
  distributed according to $f_0$. $P_0$ is the probability measure
  corresponding to $f_0$. $\Theta$ is the collection of all the
  $p$-dimensional density functions supported by the binary partitions
  as defined in Section \ref{subsec:sieve}. The prior
  distribution on $\Theta$ is as specified in Section
  \ref{subsec:prior}. If $f_0 \in \mathcal{F}_0$ and $\kappa > \max(2,
  4r)$, then $\epsilon_n = n^{-\frac{r}{2r+1}} (\log n)^{2+
    \frac{1}{2r}}$ is posterior concentration rate.
\end{theorem}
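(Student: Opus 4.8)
The plan is to verify the three standard ingredients of the posterior contraction theory of \cite{ghosal00} and \cite{shen2001}---a prior-mass bound near $f_0$, a sieve whose complement has exponentially small prior mass, and a metric-entropy bound on that sieve---and then to combine them; the claimed $\epsilon_n$ is essentially the smallest sequence for which all three hold at once. Write $B_n = \{f \in \Theta : \rho(f, f_0) \ge M \epsilon_n\}$, so that
\begin{equation*}
\Pi(B_n \mid Y_1, \ldots, Y_n) = \frac{\int_{B_n} \prod_{j=1}^n \big( f(Y_j)/f_0(Y_j) \big)\, d\Pi(f)}{\int_{\Theta} \prod_{j=1}^n \big( f(Y_j)/f_0(Y_j) \big)\, d\Pi(f)} =: \frac{N_n}{D_n}.
\end{equation*}
Section \ref{sec:lowerbd} lower bounds $D_n$, Section \ref{sec:upperbd} upper bounds $N_n$ via a sieve, an entropy estimate and tests, and Section \ref{sec:finalproof} combines them.

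For the denominator I would use $f_0 \in \mathcal{F}_0$. Choose $I_n$ of order $n^{1/(2r+1)}$ up to logarithmic factors, so the approximating density $f_{I_n} \in \Theta_{I_n}$ has $\rho(f_{I_n}, f_0) \le A_2 I_n^{-r} \lesssim \epsilon_n$. First adjust the weights of $f_{I_n}$ so each exceeds the truncation level $\tau = D I_n^{-\kappa}$; here $\kappa > \max(2, 4r)$ ensures this perturbation is $o(\epsilon_n)$ in Hellinger distance. The adjusted density $f$ satisfies $f \ge \tau$ on $\Omega$, so, splitting $\Omega$ according to whether $f_0$ exceeds a slowly growing threshold and using the finite second moment of $f_0$ on the tail, one gets from $\rho(f,f_0) \lesssim \epsilon_n$ that $K(f_0,f) \lesssim \epsilon_n^2$ and $V(f_0,f) \lesssim \epsilon_n^2$, the main loss being a factor $1 + \log(1/\tau) \asymp \log n$ that is absorbed into the logarithmic powers of $\epsilon_n$. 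Finally, the partition carrying $f$ is one of the $T_{I_n}$ partitions of size $I_n$, so it has prior mass $\gtrsim e^{-\lambda I_n \log I_n}/T_{I_n} \ge e^{-c I_n \log I_n}$; within it, since $\rho$ restricted to densities on a common partition equals the Hellinger-type distance on the weight vectors, a ball of $\ell^1$-radius of order $\epsilon_n \sqrt{\tau}$ around the weights of $f$ lies inside the corresponding Kullback--Leibler neighborhood and carries truncated-Dirichlet mass $\ge e^{-c' I_n \log n}$. Since $I_n \log n \lesssim n\epsilon_n^2$, this gives $\Pi\big( f : K(f_0,f) \le \epsilon_n^2,\ V(f_0,f) \le \epsilon_n^2 \big) \ge e^{-c'' n \epsilon_n^2}$, and the standard argument of \cite{ghosal00} then yields $D_n \ge e^{-(1 + c'') n \epsilon_n^2}$ on an event of $P_0^n$-probability tending to $1$.

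For the numerator, take the sieve $\mathcal{F}_n = \Theta_{J_n}$ with $J_n$ of order $n \epsilon_n^2 / \log n$ (still $o(n)$, so the restriction of the prior to $\cup_{I \le n} \Theta_I$ is harmless). Then $\Pi(\Theta \setminus \mathcal{F}_n) = \sum_{I > J_n} \Pi(\Theta_I) \le \sum_{I > J_n} e^{-\lambda I \log I} \le e^{-c_1 n\epsilon_n^2}$ with $c_1$ as large as desired. Since $\Theta_{J_n}$ is a union of $T_{J_n}$ copies of the truncated $J_n$-simplex mapped into $(\Theta,\rho)$ by the weight-to-density map, which after truncation is $O(\tau^{-1/2})$-Lipschitz from $\ell^1$,
\begin{equation*}
\log N(\epsilon_n, \mathcal{F}_n, \rho) \le \log T_{J_n} + J_n \log\big( C/(\epsilon_n \sqrt{\tau}) \big) \le c^* J_n \log J_n + J_n \log\big( C/(\epsilon_n \sqrt{\tau}) \big) \lesssim J_n \log n \lesssim n \epsilon_n^2 .
\end{equation*}
This entropy bound yields, by the Le Cam--Birg\'e construction, tests $\phi_n$ with $\mathbb{E}_{f_0} \phi_n \to 0$ and $\sup\{ \mathbb{E}_f(1 - \phi_n) : f \in \mathcal{F}_n,\ \rho(f, f_0) \ge M \epsilon_n \} \le e^{-c_2 M^2 n \epsilon_n^2}$. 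To combine, write $\Pi(B_n \mid Y) \le \phi_n + (1-\phi_n)\Pi(B_n \mid Y)$, split $B_n$ into $B_n \cap \mathcal{F}_n$ and $B_n \setminus \mathcal{F}_n$, bound $\mathbb{E}_{f_0}$ of the corresponding pieces of $(1 - \phi_n) N_n$ by $e^{-c_2 M^2 n\epsilon_n^2}$ and by $\Pi(\mathcal{F}_n^c) \le e^{-c_1 n\epsilon_n^2}$ respectively, and divide by the bound $e^{-(1+c'')n\epsilon_n^2}$ on $D_n$ valid on the high-probability event above; choosing $M$ so that $c_2 M^2 > 1 + c''$ and $c_1 > 1 + c''$ makes the whole quantity tend to $0$ in $P_0^n$-probability, which is the theorem.

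The step I expect to be the main obstacle is the denominator bound, and within it the simultaneous control of three things: bounding $K(f_0,f)$ and $V(f_0,f)$ when $f_0$ is only assumed to have a finite second moment rather than to be bounded; choosing the truncation exponent so that forcing every weight above $\tau = D I_n^{-\kappa}$ spoils neither the $I_n^{-r}$ approximation nor the Dirichlet prior-mass lower bound (this is what consumes $\kappa > \max(2, 4r)$); and checking that the resulting logarithmic losses---a $\log(1/\tau) \asymp \log n$ factor in the divergence estimates and a $\log n$ per coordinate in the Dirichlet mass---are exactly what the factor $(\log n)^{2 + 1/(2r)}$ in $\epsilon_n$ is there to absorb, so that one and the same $\epsilon_n$ meets the prior-mass, sieve-mass and entropy requirements at once.
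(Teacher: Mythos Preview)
Your argument is correct, but it takes a genuinely different route from the paper's own proof.

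You run the standard Ghosal--Ghosh--van der Vaart machinery with a \emph{single} sieve $\mathcal{F}_n=\Theta_{J_n}$, $J_n\asymp n\epsilon_n^2/\log n$, verify one global entropy bound $\log N(\epsilon_n,\mathcal{F}_n,\rho)\lesssim n\epsilon_n^2$, construct Le Cam--Birg\'e tests on $\mathcal{F}_n$, and dispose of $\mathcal{F}_n^c$ via the prior tail $\sum_{I>J_n}e^{-\lambda I\log I}$. The paper instead keeps the decomposition $\Theta=\cup_I\Theta_I$ and handles the numerator block by block: for small $I<N_1=n^{1/(2r+1)}(\log n)^{-1/r}$ it exploits the \emph{lower} approximation bound $\rho(f,f_0)\ge A_1I^{-r}$ to conclude every $f\in\Theta_I$ is far from $f_0$ and applies the Wong--Shen likelihood-ratio inequality (Theorem~\ref{thm:likelihoodratio}) with radius $A_1I^{-r}$; for intermediate $I\in[N_1,N_2]$ it applies the same inequality with radius $\eta_{n,I}$; for large $I>N_2=Dn^{1/(2r+1)}(\log n)^2$ it carries out an explicit marginal-likelihood computation using Dirichlet--multinomial conjugacy and Stirling. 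The denominator arguments are essentially the same in spirit (Lemma~\ref{lemma:lowerbd} is exactly your prior-mass step at $I_n=n^{1/(2r+1)}$), and your identification of where $\kappa>\max(2,4r)$ and the $(\log n)^{2+1/(2r)}$ factor enter matches the paper.

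What each approach buys: yours is shorter and avoids the lengthy block-III marginal-likelihood calculation entirely, since the exponential prior decay alone kills $I>J_n$. It also never uses the two-sided assumption $A_1I^{-r}\le\min_{g\in\Theta_I}\rho(g,f_0)$; only the upper bound $A_2I^{-r}$ is needed, so your argument in fact proves the theorem under a weaker hypothesis than the paper's. The paper's $I$-by-$I$ decomposition, on the other hand, gives more explicit control at every model size and relies on the Wong--Shen uniform likelihood-ratio bound rather than abstract tests, which some readers find more transparent. Two minor points in your write-up: the $\sqrt{\tau}$ in your entropy estimate is unnecessary (Lemma~\ref{lemma:dballupperbound} has no truncation dependence, since $\rho^2\le\|\theta-\theta'\|_1$ without any lower bound on the weights), and to make the sieve-complement constant $c_1$ exceed $1+c''$ you should take $J_n=Cn\epsilon_n^2/\log n$ for a suitably large constant $C$ rather than rely on $\lambda$.
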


The strategy to show this theorem is to write the posterior
probability measure as
\begin{eqnarray}
  \label{eq:posteriorprob}
 \nonumber && \Pi( f: \rho(f, f_0) \geq M \epsilon_n|Y_1, \cdots, Y_n)
 \\ 
&=& \frac{\sum_{I=1}^ {\infty} \int_{ \{f: \rho(f, f_0) \geq M \epsilon_n \} \cap
     \Theta_I}  \prod_{j=1}^{n} \frac{f(Y_j)}{f_0(Y_j)}
    d\Pi (f)} {\sum_{I=1}^ {\infty} \int_{\Theta_I} \prod_{j=1}^n
   \frac{ f(Y_j)} {f_0(Y_j)} d \Pi(f)}.
\end{eqnarray}
The proof still relies on the mechanism developed in the landmark
works \cite{ghosal00}
and \cite{shen2001}. We first derive the upper bounds for the items in the numerator by
employing previous results from the study of empirical process in Section
\ref{sec:upperbd}. Then we lower
bound the prior mass of the shrinking ball around the true density in Section
\ref{sec:lowerbd}. In Section
\ref{sec:finalproof}, these bounds are integrated together, leading to
a complete proof of the posterior concentration rate. 

\subsection{Discussion}
\subsubsection{Comparison to the sieve MLE}
In the companion work \cite{LW2014}, we studied convergence rate of the sieve maximum
likelihood estimators. In that paper, the approximating spaces $\Theta_I$ are
defined in the same way, and we consider the same subset of density
functions $\mathcal{F}_0$. 

For any $f \in \Theta_I$, the log-likelihood is defined to be
\begin{equation*}
  L_n(f) = \sum_{j=1}^n \log f(Y_j) = 
  \sum_{i=1}^I N_i \log \beta_i,
\end{equation*}
where $N_i$ is the count of data points in $\Omega_i$, i.e., $N_i= \mbox{card}\{j: Y_j \in \Omega_i, 1\leq j\leq n\}$. The maximum likelihood
estimator on $\Theta_I$ is defined to
be
\begin{equation*}
  \hat{f}_{n,I} = \arg\max_{f \in \Theta_I} L_n(f).
\end{equation*} 
Next theorem presented the result on convergence rate of sieve MLE. It is
cited from \cite{LW2014}.
\begin{theorem}
\label{th:convergencerate}
For any $f_0 \in \mathcal{F}_0$, $\hat{f}_{n,I}$ is the corresponding maximum
likelihood estimator over $\Theta_I$. $r$ is the
parameter that characterizes the decay rate of the approximation error to $f_0$ by the
elements in $\Theta_I$. Assume that
$n$ and $I$ satisfy
\begin{equation}
  \label{eq:thmC2}
  I = \left( (2^8 A_2^2 r /c_1) \frac{n}{\log n} \right)^{\frac{1}{2r+1}},
\end{equation}
where the constant $c_1$ can be chosen to be in $(0, 1)$, and $A_2$ is
a constant associated with the decay rate of the approximation error. Then the convergence rate of the
sieve MLE is $ n^{-\frac{r}{2r+1}} (\log n)
^{(\frac{1}{2}+\frac{r}{2r+1})}$.  
\end{theorem}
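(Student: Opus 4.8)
The plan is to derive Theorem~\ref{th:convergencerate} from the general theory of convergence rates for sieve maximum likelihood estimators in Hellinger distance (as in \cite{shen1994} and the results collected in \cite{LW2014}). Two ingredients govern the rate: the \emph{approximation error} $\inf_{f\in\Theta_I}\rho(f,f_0)\le A_2 I^{-r}$, the deterministic price of restricting the MLE to $\Theta_I$, and the \emph{stochastic error}, controlled by the Hellinger bracketing entropy of $\Theta_I$. The rate $\epsilon_n$ is the smallest $\epsilon$ dominating both contributions, and the choice \eqref{eq:thmC2} of $I$ is precisely the one that makes them comparable.

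First I would bound the bracketing entropy of $\Theta_I$. Writing $\Theta_I=\bigcup\mathcal{F}(\{\Omega_i\}_{i=1}^I)$ over the $T_I$ binary partitions of size $I$, and noting that for a fixed partition $\mathcal{F}(\{\Omega_i\}_{i=1}^I)$ is parametrized by a probability vector in the $(I-1)$-dimensional simplex with $\theta\mapsto\sqrt f$ suitably Lipschitz, a standard covering of the simplex gives $H_{[]}(\delta,\mathcal{F}(\{\Omega_i\}_{i=1}^I),\rho)\lesssim I\log(1/\delta)$. A union bound over the $T_I$ partitions together with $\log T_I\le c^* I\log I$ yields
\begin{equation*}
H_{[]}(\delta,\Theta_I,\rho)\ \lesssim\ I\log I + I\log(1/\delta).
\end{equation*}
Inserting this into the entropy condition of the sieve MLE theorem --- either in Dudley-integral form or the cruder form $H_{[]}(\epsilon,\Theta_I,\rho)\le c_1 n\epsilon^2$ with $c_1\in(0,1)$ --- and using that $\log I\asymp\log(1/\epsilon)\asymp\log n$ on the relevant scale, the condition reduces to $n\epsilon^2\gtrsim I\log I$. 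Chasing the explicit constant $c_1$ through this inequality with $\epsilon\asymp A_2 I^{-r}$ is what forces $I$ to be of order $(n/\log n)^{1/(2r+1)}$, reproducing \eqref{eq:thmC2}.

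Next I would handle the bias side. Since $f_0\in\mathcal{F}_0$, there is $f_I\in\Theta_I$ with $\rho(f_I,f_0)\le A_2 I^{-r}$. The sieve MLE argument rests on the comparison $L_n(\hat f_{n,I})\ge L_n(f_I)$, which is only useful if $K(f_0,f_I)$ and $V(f_0,f_I)$ are small; I would therefore construct, or slightly modify, $f_I$ so that it is bounded below on the support of $f_0$ by at worst a negative power of $I$ while preserving $\rho(f_I,f_0)\lesssim I^{-r}$, using the finite second moment of $f_0$ to absorb the regions where $f_0$ is small. Standard inequalities relating Hellinger distance to Kullback--Leibler divergence (valid once $f_0/f_I$ is bounded) then give $K(f_0,f_I)\lesssim I^{-2r}\log n$, with $V(f_0,f_I)$ controlled by the same quantity and anyway non-binding after division by $n$. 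Combining the bias control with the entropy bound via the standard peeling argument and an exponential inequality for the local likelihood ratio, one concludes that, with $P_0^n$-probability tending to $1$, $\rho(\hat f_{n,I},f_0)\lesssim\epsilon$ whenever $\epsilon^2\gtrsim I^{-2r}\log n$ and $n\epsilon^2\gtrsim I\log I$. With $I$ as in \eqref{eq:thmC2} the first requirement dominates, giving $\epsilon_n\asymp I^{-r}\sqrt{\log n}\asymp n^{-\frac{r}{2r+1}}(\log n)^{\frac12+\frac{r}{2r+1}}$, the claimed rate.

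The step I expect to be the main obstacle is converting the Hellinger approximation bound into genuine control of $K(f_0,f_I)$ and $V(f_0,f_I)$: this is where a density that vanishes or is small on part of $\Omega$ interacts with the binary-partition geometry, and the modification of $f_I$ together with the second-moment hypothesis must be arranged so as not to spoil the $I^{-r}$ rate while keeping the logarithmic inflation to a single factor of $\log n$ --- this is exactly what produces the exponent $\frac12+\frac{r}{2r+1}$ rather than $\frac{r}{2r+1}$. A secondary technical point is making the bracketing-entropy estimate uniform over all $T_I$ binary partitions of size $I$ through the combinatorial bound $\log T_I\le c^* I\log I$, and carrying the explicit constants $c_1$, $A_2$ and $r$ in \eqref{eq:thmC2} through the exponential inequality so that the entropy condition and the bias balance hold simultaneously.
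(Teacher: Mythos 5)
Your proposal is correct and follows essentially the same route as the paper: this theorem is only cited here from the companion paper, but the machinery this paper reproduces (the bracketing entropy bound of Lemma 3.1 with $\log T_I \le c^* I \log I$, the Wong--Shen uniform likelihood-ratio inequality of Theorem 3.1, and the Hellinger-to-Kullback--Leibler conversion of Lemma 4.2 under the second-moment condition) is exactly the chain you describe, including the balance $I\log I/n \asymp I^{-2r}$ yielding the choice of $I$ and the extra $\sqrt{\log n}$ from the KL control of the approximant that produces the exponent $\tfrac12+\tfrac{r}{2r+1}$. No substantive gap.
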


Comparing these two rates, we can easily see that they are of the same order up to a
logarithmic term. However, for the sieve method, in order to
achieve the optimal convergence rate we need to match the size of the
partition $I$ to the sample size $n$. And this matching depends on
some unknown property of the true density function, i.e., the decay
rate of the approximation error $r$. This implies that, in practice it is
computationally infeasible to achieve optimal rate under the frequency setting. On the other hand, under Bayesian settings, by
imposing a prior on $\Theta_I$, we are able
to achieve the optimal rate without any a priori information. This is
one of the major improvements of the Bayesian method. 

\subsubsection{Computational issues}
The total number of binary partitions grows
exponentially in $I$, thus it is urgent to solve the computational
issues. In \cite{LJW}, as we mentioned
before, the authors imposed a very similar prior distribution. By
employing sequential importance sampling, they have designed efficient
algorithm to sample from the posterior distribution. Currently, the
dimension of the problem can be moderately large, saying around 50. 
\subsubsection{Applications to different function classes}
In the parallel paper, we studied decay rates of the approximation error for different
density functions classes, including the densities satisfying a type of
sparsity, the space of bounded variation, and mixed-H\"older
continuous functions. Since in this paper we use the same
approximating spaces, those results still hold. Given this, we can also calculate
the corresponding rates of posterior contraction. Based on the
minimaxity of Bayesian estimator, these rates are at least upper bounds of
minimax convergence rates. In fact, for the one dimensional
density functions of bounded variation, the posterior contraction rate
is $n^{-1/3} (\log n ) ^{5/2}$. If we estimate the
density by wavelet thresholding, the convergence rate is $n^{-1/3}
(\log n)^{1/3}$. As a benchmark, the minimax rate of convergence is $n^{-1/3}$. 

\subsubsection{Univariate case}
In \cite{rousseau2010}, the author investigated rates of convergence
for the posterior distribution under the mixture of Beta prior. The
true density function is assumed to be H\"older continuos on
$[0,1]$. More rigorously, the class of H\"older functions $\mathcal{H} (L, \beta)$ with
regularity function $\beta$ is defined as
the following: let $\kappa$ be the largest integer smaller than
$\beta$, and denote by $f^{(\kappa)}$ its $\kappa$th derivative.
\begin{equation*}
  \mathcal{H} (L, \beta) = \{ f: [0,1] \rightarrow \mathbb{R}:
    |f^{(\kappa)} (x) - f^{(\kappa)} (y) | \leq L |x-y|^{\beta-\kappa} \}.
\end{equation*}
Then, under a class of location mixtures of Beta models, the
concentration rate of the posterior distribution is $n^{-\beta
  /(2\beta +1)}$, up to a $\log n$ term. It is known that the rate
$(n/ \log n) ^{-\beta/ (2\beta+1)}$ is the minimax rate of convergence for class
$\mathcal{H}(\beta,L)$.

Under the prior distribution specified in this paper, we can also
study the posterior contraction rate for the H\"older class. However,
given the piecewise constant approximations, we will only study the
H\"older continuous function on $[0,1]$ with regularity parameter $\beta$ in
$(0,1]$. For this class of density functions, we already calculated the decay
rate of the approximation error in \cite{LW2014}. Then the convergence
rate of the posterior distribution is $n^{-\frac{\beta}{ 2\beta +1}} (\log
n) ^{2+ \frac{1}{2\beta}}$. Up to a $\log n$ term,
this method still achieves the minimax rate of convergence. 
\section{Upper bound of the numerator}
\label{sec:upperbd}
Briefly speaking, the numerator can be bounded by controlling the
complexity of the parameter space $\Theta$. Here, the complexity of
the model is measured by the \emph{metric entropy}. A general discussion of metric entropy can be found in \cite{kolmogorov1992selected}.
In this section, we introduce a form of metric
entropy with bracketing corresponding to the relavent parameter space, 
and provide an upper bound for the metric entropy of the
approximating spaces defined in Section \ref{subsec:sieve}. These
bounds lead to upper bounds for the items in the numerator of (\ref{eq:posteriorprob}).

\begin{definition}
  Let $(\Theta, \rho)$ be a seperable pseudo-metric
  space. $\Theta(\epsilon)$ is a finite set of pairs of functions $\{
  (f_j^L, f_j^U), j=1,\cdots, N\}$ satisfying
  \begin{equation}
    \label{eq:entropyDefC1}
    \rho( f_j^L, f_j^U) \leq \epsilon\ for\ j=1,\cdots,N,
  \end{equation}
and for any $f \in \Theta$, there is a $j$ such that
\begin{equation}
  \label{eq:entropyDefC2}
  f_j^L \leq f \leq f_j^U.
\end{equation}
Let 
\begin{equation}
  \label{eq:entropyDefN}
  N(\epsilon, \Theta, \rho) = min \{card\ \Theta(\epsilon): (\ref{eq:entropyDefC1})\ and\ (\ref{eq:entropyDefC2})\ are\ satisfied
\}.
\end{equation}
Then, we define the metric entropy with bracketing of $\Theta$ to be 
\begin{equation}
  \label{eq:entropyDef}
  H(\epsilon, \Theta, \rho) = \log N(\epsilon, \Theta, \rho).
\end{equation}
\end{definition}

Recall that $\Theta_1, \cdots, \Theta_I, \cdots$ are the approximating spaces defined in section
\ref{subsec:sieve}. The
next lemma is devoted to an upper bound for the bracketing metric entropy of
$\Theta_I$.

%------------------Lemma 1 on Metric Entropy of Theta_I--------------------
\begin{lemma}
  \label{lemma:dballupperbound}
Take $\rho$ to be the Hellinger distance. Let $\Theta_I^d = \{ f \in \Theta_I: \rho(f, f_0) \leq
d\}$. Then, 
\begin{eqnarray}
  \label{eq:dballupperbound}
\nonumber  & &H(\epsilon, \Theta_I^d, \rho) \\
&\leq& I \log p + (I+1) \log(I+1) + \frac{I}{2} \log I + I \log \frac{d}{\epsilon} +c',
\end{eqnarray}
where c is a constant not dependent on I or d. 
\end{lemma}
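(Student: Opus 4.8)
The plan is to reduce the problem to discretizing the bin heights over each fixed partition. Write $\Theta_I^d$ as the union, over all binary partitions $P=\{\Omega_i\}_{i=1}^I$ of size $I$, of the sets $\Theta_I^d(P)$ of densities in $\Theta_I^d$ that are constant on the cells of $P$. There are $T_I$ such partitions, and counting the sequence of cuts (at the step with $k$ cells one chooses one of $k$ cells and one of $p$ coordinates) gives $T_I\le (I-1)!\,p^{\,I-1}$, so $\log T_I\le I\log p+(I+1)\log(I+1)$; this supplies the first two terms of the bound, and it remains to bracket each $\Theta_I^d(P)$ by at most $\exp\bigl(\tfrac I2\log I+I\log\tfrac d\epsilon+O(I)\bigr)$ pairs and then take the union over $P$.

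Fix $P$ and write a density on $P$ as $f=\sum_{i}\beta_i\mathbbm{1}_{\Omega_i}$, reparametrized by $u_i:=\sqrt{\beta_i\,\mu(\Omega_i)}\ge 0$, so that $\sum_i u_i^2=1$ and in particular $u_i\in[0,1]$. Expanding the Hellinger integral cell by cell yields the pointwise identity
$$\rho^2(f,f_0)=\sum_{i=1}^I (u_i-b_i)^2 + c_0^2,\qquad b_i:=\frac{1}{\sqrt{\mu(\Omega_i)}}\int_{\Omega_i}\sqrt{f_0}\,d\mu,$$
where, by Cauchy--Schwarz, $b_i\in[0,1]$, $\sum_i b_i^2\le 1$, and $c_0^2:=1-\sum_i b_i^2\ge 0$ is a fixed number depending only on $f_0$ and $P$. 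Consequently $\rho(f,f_0)\le d$ forces $\sum_i(u_i-b_i)^2\le d^2$, hence each $u_i$ lies in the interval $J_i:=[b_i-d,\,b_i+d]\cap[0,1]$, whose length is at most $2d$. This localization of each coordinate is the only place the restriction to the $d$-ball is used, and it is what turns $\log(1/\epsilon)$ into $\log(d/\epsilon)$.

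Next, overlay on each $J_i$ a grid with spacing $\delta:=\epsilon/(2\sqrt I)$, containing at most $2d/\delta+2=4d\sqrt I/\epsilon+2$ points. Given $f$, round each $u_i$ to the nearest grid point $\hat u_i$ (so $|u_i-\hat u_i|\le\delta$) and define the bracket $[f^L,f^U]$ by prescribing, on $\Omega_i$, the heights whose square roots are $\bigl((\hat u_i-\delta)_+\bigr)/\sqrt{\mu(\Omega_i)}$ and $(\hat u_i+\delta)/\sqrt{\mu(\Omega_i)}$. Then $f^L\le f\le f^U$ pointwise, and on $\Omega_i$ one has $(\sqrt{\beta_i^U}-\sqrt{\beta_i^L})^2\mu(\Omega_i)=\bigl((\hat u_i+\delta)-(\hat u_i-\delta)_+\bigr)^2\le(2\delta)^2$, so $\rho^2(f^L,f^U)\le 4I\delta^2=\epsilon^2$. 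The number of distinct brackets produced over $P$ is at most $(4d\sqrt I/\epsilon+2)^I$; taking logarithms (and, in the only relevant regime $\epsilon\le d$, simplifying $\log(4d\sqrt I/\epsilon+2)\le\log(d/\epsilon)+\tfrac12\log I+O(1)$) gives $\tfrac I2\log I+I\log\tfrac d\epsilon+O(I)$ per partition. Summing the bracket sets over the $T_I$ partitions and absorbing the $O(I)$ slack into the already present $\tfrac I2\log I$ term and an absolute additive constant $c'$ yields the stated inequality.

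The counting is routine; the step that needs genuine care is the cell-by-cell Hellinger identity together with the verification that rounding in the square-root variables produces honest two-sided brackets of Hellinger width at most $\epsilon$ — in particular the boundary truncation $(\hat u_i-\delta)_+$ and the checks $c_0^2\ge 0$, $b_i\le1$. The one mild obstacle I anticipate is packaging the partition count so that it appears as $I\log p+(I+1)\log(I+1)$ rather than as a single $cI\log I$ term; this is handled by counting ordered cut-sequences and bounding $\log((I-1)!)$ by $(I+1)\log(I+1)$ instead of invoking the coarser estimate $\log T_I\le c^*I\log I$.
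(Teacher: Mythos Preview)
Your decomposition — count partitions, then bracket each fixed-partition simplex by a grid in the square-root coordinates $u_i=\sqrt{\beta_i\mu(\Omega_i)}$ — is exactly the standard route, and the paper simply defers to the companion paper \cite{LW2014} for precisely this argument. The Hellinger identity, the Cauchy--Schwarz bound $\sum_i b_i^2\le 1$, and the bracket construction with spacing $\delta=\epsilon/(2\sqrt I)$ are all correct.

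The one place your write-up does not go through as stated is the final sentence. Your per-partition count is at most $(4d\sqrt I/\epsilon+2)^I$, whose logarithm is $I\log(d/\epsilon)+\tfrac{I}{2}\log I + cI$ with $c\approx\log 6$; this $cI$ term is \emph{in addition to} the $\tfrac{I}{2}\log I$ already displayed in the lemma, and it cannot be ``absorbed into $\tfrac{I}{2}\log I$ and an absolute constant $c'$'' as you claim. As written you have proved the bound with $c'$ replaced by $c'I$, which is all that is actually used downstream (the entropy integral only needs order $I\log I$), but it is not the statement as printed.

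If you want the bound exactly as stated, sharpen the count: instead of taking all grid boxes in the product $\prod_i J_i$, count only those boxes of side $\delta$ that meet the Euclidean ball $\{u:\|u-b\|_2\le d\}$. Each such box lies inside the ball of radius $d+\epsilon$, so their number is at most
\[
\frac{\mathrm{Vol}\bigl(B_2(d+\epsilon)\bigr)}{\delta^I}
=\frac{\pi^{I/2}(d+\epsilon)^I}{\Gamma(I/2+1)}\Bigl(\frac{2\sqrt I}{\epsilon}\Bigr)^I .
\]
Stirling on $\Gamma(I/2+1)$ cancels the $I^{I/2}$, leaving $I\log(d/\epsilon)+O(I)$ with \emph{no} $\tfrac{I}{2}\log I$ term; since $O(I)\le \tfrac{I}{2}\log I + c'$ for an absolute constant $c'$ (the inequality holds once $I$ exceeds a fixed threshold, and below that threshold the $O(I)$ is bounded), the stated form follows.
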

\begin{proof}
See \cite{LW2014} proof of Lemma 3.1 and Lemma 3.2.
\end{proof}

Our next theorem, which is Theorem 1 in \cite{wong1995}, gives a
uniform exponential bound for likelihood ratios. 

\begin{theorem}[Wong and Shen (1995)]
\label{thm:likelihoodratio}
  There exist positive constants $a>0$, $c$, $c_1$ and $c_2$, such that,
  for any $\epsilon>0$, if 
  \begin{equation}
    \label{eq:entropycondition}
    \int_{\epsilon^2/8}^{\sqrt{2}\epsilon}H^{1/2} ( u/a, \mathcal{P},
    \rho) du \leq c n^{1/2} \epsilon^2,
  \end{equation}
then
\begin{equation*}
  \mathbb{P}_{f_0} \Big( \sup_{\{ \rho(f, f_0) \geq \epsilon, f \in
    \mathcal{P} \}} \prod_{i=1}^n \frac{f(Y_i)}{f_0(Y_i)} \geq \exp( -c_1
  n \epsilon^2) \Big) \leq 4 \exp(-c_2 n \epsilon^2),
\end{equation*}
where $\mathbb{P}_{f_0}$ is understood to be the outer probability
mesure under $f_0$. The constants $c_1$ and $c_2$ can be chosen in
$(0,1)$ and $c$ can be set as $(2/3)^{5/2} /512$.
\end{theorem}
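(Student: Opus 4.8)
The plan is to prove this inequality by first getting an exponential bound for a single bracket via the Hellinger affinity identity and a square-root Chernoff estimate, and then upgrading to the supremum over $\mathcal{P}$ through a peeling argument over Hellinger shells combined with a multi-scale (chaining) bracketing. The engine is the identity $\int_\Omega \sqrt{f\,f_0}\,d\mu = 1 - \tfrac12\rho^2(f,f_0)$ valid for any density $f$, together with the following correction for an upper bracket: if $f^L \le f \le h$ with $\rho(f^L,h)\le\delta$, then, since $0 \le \sqrt{h}-\sqrt{f}\le \sqrt{h}-\sqrt{f^L}$ and $\|\sqrt{f_0}\|_2 = 1$, Cauchy--Schwarz gives
\[
\int_\Omega \sqrt{h\,f_0}\,d\mu \;\le\; \int_\Omega \sqrt{f\,f_0}\,d\mu + \| \sqrt{f_0}\|_2\,\| \sqrt{h}-\sqrt{f^L}\|_2 \;\le\; 1 - \tfrac12\rho^2(f,f_0) + \delta .
\]
Because $\prod_i f(Y_i)/f_0(Y_i)\le\prod_i h(Y_i)/f_0(Y_i)$ when $f\le h$, and $\mathbb{E}_{f_0}\big[\prod_i (h(Y_i)/f_0(Y_i))^{1/2}\big] = \big(\int\sqrt{h f_0}\big)^n$, Markov's inequality applied to $\prod_i (h(Y_i)/f_0(Y_i))^{1/2}$ yields, for a bracket whose upper end $h$ contains some $f$ with $\rho(f,f_0)\ge s$ and with $\delta$ a small fraction of $s^2$,
\[
\mathbb{P}_{f_0}\Big(\prod_i \tfrac{h(Y_i)}{f_0(Y_i)} \ge e^{-c_1 n s^2}\Big) \;\le\; e^{c_1 n s^2/2}\,(1-\tfrac14 s^2)^n \;\le\; \exp\!\big(-(\tfrac14-\tfrac{c_1}{2})\,n s^2\big),
\]
which is the base estimate and already forces $c_1<\tfrac12$.

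Second, I would peel the set $\{f\in\mathcal{P}:\rho(f,f_0)\ge\epsilon\}$ into Hellinger shells $\{f: 2^{l/2}\epsilon \le \rho(f,f_0) < 2^{(l+1)/2}\epsilon\}$, $l\ge 0$; only finitely many are nonempty because $\rho^2(f,f_0)=\int(\sqrt f-\sqrt{f_0})^2 \le \int(f+f_0)=2$. On the shell of scale $s$ the affinity bound improves to $\exp(-n s^2/4)$, so the contributions of the outer shells decay rapidly in $l$ and the whole sum over shells is controlled by a convergent geometric series dominated by the innermost shell $\{\epsilon \le \rho(f,f_0) < \sqrt2\,\epsilon\}$. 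This geometric structure is also the source of the freedom to pick $c_1,c_2\in(0,1)$ and of the numerical factor in front (the $4$ in $4\exp(-c_2 n\epsilon^2)$).

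Third, and this is the heart of the argument, within the innermost shell I would replace the single coarse bracketing at scale $\sim\epsilon^2$ by a chain of bracketings at geometrically decreasing scales, the coarsest being $\sqrt2\,\epsilon$ and the finest of order $\epsilon^2/8$ (the $8$ coming from requiring the bracket correction $\delta$ to be at most $\tfrac14\rho^2$-worth so that $1-\tfrac12\epsilon^2+\delta\le 1-\tfrac38\epsilon^2$). Writing each $f$ via successive bracket approximations $f^{(0)},f^{(1)},\dots$, the log-likelihood ratio telescopes into increments $\log(f^{(k)}/f^{(k-1)})$; level $k$ costs an entropy $H(\delta_k/a,\mathcal{P},\rho)$ in the union bound but is controlled by a correspondingly sharper Hellinger-affinity estimate, and summing the levels converts the requirement $\sum_k \delta_k\,H^{1/2}(\delta_k/a,\mathcal{P},\rho)\lesssim n^{1/2}\epsilon^2$ into the bracketing-entropy integral $\int_{\epsilon^2/8}^{\sqrt2\,\epsilon} H^{1/2}(u/a,\mathcal{P},\rho)\,du \le c\, n^{1/2}\epsilon^2$. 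The constant $a$ absorbs the slack in the affinity step, and $c=(2/3)^{5/2}/512$ comes out of optimizing the geometric ratio of the $\delta_k$ against the truncation level below and the constant-tracking through the Markov/affinity bounds; the outer shells are absorbed more cheaply since their affinity bounds are already extremely small.

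The main obstacle is precisely this chaining step. Unlike the usual additive empirical-process chaining, the object here is a product of likelihood ratios whose logarithmic increments $\log(f^{(k)}/f^{(k-1)})$ are neither bounded above nor below, since the densities may be arbitrarily small, so a naive Bernstein-type bound on each increment is unavailable. I would handle this the Wong--Shen way: truncate each increment at a level tied to the current bracket width, control the truncated part by the affinity/Chernoff bound exactly as in the base estimate, and show the untruncated remainder is negligible because the bracketing property forces the region where an increment is large to have $f_0$-probability bounded uniformly in terms of $\delta_k$. Balancing the truncation level, the geometric ratio of the scales, and the entropy accumulated across levels is what produces the admissible constant $c$ and, after summing over chaining levels and over shells, the clean tail $4\exp(-c_2 n\epsilon^2)$.
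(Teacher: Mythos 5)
A point of context first: the paper does not prove this statement at all --- it is quoted (as Theorem~1) from Wong and Shen (1995), so the only meaningful comparison is with the original proof in that reference. Your outline reproduces the architecture of that proof: the Hellinger-affinity identity $\int\sqrt{f f_0}\,d\mu = 1-\frac12\rho^2(f,f_0)$ combined with a Markov bound on $\prod_i (h(Y_i)/f_0(Y_i))^{1/2}$ for an upper bracket $h$ (your Cauchy--Schwarz correction by the bracket width $\delta$ is correct, as is the resulting single-bracket exponential bound), the peeling of $\{f:\rho(f,f_0)\ge\epsilon\}$ into finitely many Hellinger shells, and a multi-scale bracketing (chaining) argument whose bookkeeping is what produces the entropy-integral condition. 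Up to and including the peeling step your argument is sound.

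The genuine gap sits exactly where you yourself locate ``the heart of the argument.'' The chaining step is described but not executed: you telescope $\log(f/f_0)$ into increments $\log(f^{(k)}/f^{(k-1)})$, observe correctly that these are unbounded so no naive Bernstein bound applies, and then declare that you ``would handle this the Wong--Shen way'' --- which, in an attempt to prove the Wong--Shen theorem itself, is circular rather than a proof. The missing content is precisely the adaptive-truncation machinery: truncating each increment at a level tied to $\delta_k$, controlling the truncated centered sums by a Bernstein-type inequality whose variance is bounded through the $L_2(P_0)$ bracket width (which coincides with the Hellinger width after passing to the functions $(f/f_0)^{1/2}$), and controlling the untruncated remainder by moment bounds on the bracket-width functions; your one-line justification that ``the region where an increment is large has $f_0$-probability bounded uniformly in terms of $\delta_k$'' is too vague to substitute for this, and it is exactly the place where the union-bound cost $H(\delta_k/a,\mathcal{P},\rho)$ must be balanced against a level-$k$ (not level-$\epsilon$) exponential bound. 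Consequently the passage from $\sum_k \delta_k H^{1/2}(\delta_k/a,\mathcal{P},\rho)\lesssim n^{1/2}\epsilon^2$ to the stated tail bound, and in particular the specific constants ($c_1,c_2\in(0,1)$, the prefactor $4$, $c=(2/3)^{5/2}/512$, and the lower limit of the entropy integral), are asserted rather than derived --- your rationalizations of the ``$8$'' and of $c$ are plausible-sounding but are not actual derivations. As a reading guide to Wong and Shen (1995) your sketch is accurate; as a standalone proof it is incomplete at its decisive step.
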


Finally, the next lemma provides an upper bound for the items in the numerator in
(\ref{eq:posteriorprob}) when $I$ is sufficiently large. 

\begin{lemma}
\label{lemma:upperbd}
  Let $\delta_{n,I} = (\frac{I \log I}{n / \log n}) ^{1/2}$. When $n$ and $I$ are sufficiently
  large, we have 
  \begin{equation*}
    \mathbb{P}_{f_0} \Big( \sup_{\{ \rho(f, f_0) \geq \delta_{n,I}, f \in
   \Theta_I \}} \prod_{i=1}^n \frac{f(Y_i)}{f_0(Y_i)} \geq \exp( -c_1
  n \delta_{n,I}^2) \Big) \leq 4 \exp(-c_2 n \delta_{n,I}^2).
  \end{equation*}

\end{lemma}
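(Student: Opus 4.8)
The plan is to obtain the lemma as a direct application of Theorem~\ref{thm:likelihoodratio} (Wong and Shen) with $\mathcal{P}=\Theta_I$ and $\epsilon=\delta_{n,I}$, after checking that the bracketing entropy condition~(\ref{eq:entropycondition}) is satisfied; the entropy input is supplied by Lemma~\ref{lemma:dballupperbound}.

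First note that for any two densities $f,g$ one has $\rho^2(f,g)=2-2\int_\Omega\sqrt{fg}\,d\mu\le 2$, so the Hellinger diameter of $\mathcal{F}$, and a fortiori of $\Theta_I$, is at most $\sqrt 2$. Hence if $\delta_{n,I}>\sqrt 2$ the set $\{f\in\Theta_I:\rho(f,f_0)\ge\delta_{n,I}\}$ is empty and the asserted inequality is trivial; so from now on assume $\delta_{n,I}\le\sqrt 2$, and write $\epsilon:=\delta_{n,I}$, so that $\epsilon^2/8<\sqrt 2\,\epsilon$ and the integration interval in~(\ref{eq:entropycondition}) is a genuine interval. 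Since $\Theta_I=\Theta_I^{\sqrt 2}$, Lemma~\ref{lemma:dballupperbound} applied with $d=\sqrt 2$ gives, for all $I$ large enough,
\begin{equation*}
H(u/a,\Theta_I,\rho)\ \le\ C_0\, I\log I\ +\ I\log\!\big(\sqrt 2\, a/u\big),
\end{equation*}
where we have absorbed the terms $I\log p$, $(I+1)\log(I+1)$, $\tfrac12 I\log I$ and $c'$ into $C_0\,I\log I$, with $C_0$ depending only on $p$ and $c'$.

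Now I verify~(\ref{eq:entropycondition}) for $\epsilon=\delta_{n,I}$. Using $\sqrt{x+y}\le\sqrt x+\sqrt y$ (and that $H\ge 0$),
\begin{equation*}
\int_{\epsilon^2/8}^{\sqrt 2\epsilon} H^{1/2}(u/a,\Theta_I,\rho)\,du\ \le\ \sqrt{C_0\,I\log I}\;\big(\sqrt 2\,\epsilon\big)\ +\ \sqrt I\int_0^{\sqrt 2\epsilon}\sqrt{\log\!\big(\sqrt 2\, a/u\big)}\,du .
\end{equation*}
The substitution $u=\sqrt 2\,\epsilon\,t$ together with $\int_0^1\sqrt{\log(1/t)}\,dt=\Gamma(3/2)<\infty$ shows the last integral is at most $\sqrt 2\,\epsilon\big(\sqrt{\log(a/\epsilon)}+\Gamma(3/2)\big)=O\big(\epsilon\sqrt{\log(1/\epsilon)}\big)$ as $\epsilon\to 0$. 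By the definition $\delta_{n,I}^2=I\log I\,\log n/n$ we have $n\epsilon^2=I\log I\,\log n$ and $\log(1/\epsilon)\le\tfrac12\log n+O(1)$, so the displayed right-hand side is bounded by a constant multiple of $\epsilon\big(\sqrt{I\log I}+\sqrt{I\log n}\big)$, whereas the target $c\,n^{1/2}\epsilon^2$ equals $c\,\epsilon\sqrt{I\log I\,\log n}$. Dividing through by $\epsilon\sqrt{I\log I\,\log n}$, the first term contributes $O(1/\sqrt{\log n})\to 0$ and the second a term of order $1/\sqrt{\log I}$; choosing $I$ large enough that the latter falls below $c/2$ and then $n$ large enough that the former falls below $c/2$, condition~(\ref{eq:entropycondition}) holds with the prescribed constant $c$. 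Theorem~\ref{thm:likelihoodratio}, applied with $\mathcal{P}=\Theta_I$ and $\epsilon=\delta_{n,I}$, then yields the lemma.

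The main obstacle is the bookkeeping in the verification of~(\ref{eq:entropycondition}): because the bracketing entropy of $\Theta_I$ blows up like $I\log(1/u)$ as $u\downarrow 0$, the entropy integral picks up a term of order $\epsilon\sqrt{I\log(1/\epsilon)}=O(\epsilon\sqrt{I\log n})$, and absorbing this into $c\,\epsilon\sqrt{I\log I\,\log n}$ is precisely what forces the hypothesis that $I$ be sufficiently large (one needs $\log I$ to dominate the implied absolute constant divided by $c$); the ``bulk'' term $\epsilon\sqrt{I\log I}$ is smaller than the target by a factor $\sqrt{\log n}$ and costs only ``$n$ sufficiently large''. Everything else is a routine consequence of Lemma~\ref{lemma:dballupperbound} and Theorem~\ref{thm:likelihoodratio}.
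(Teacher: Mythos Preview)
Your proof is correct and follows the same approach as the paper, which simply cites the companion work \cite{LW2014} (Corollary~3.1 there): verify the entropy integral condition~(\ref{eq:entropycondition}) of Theorem~\ref{thm:likelihoodratio} for $\mathcal{P}=\Theta_I$ using the bracketing bound of Lemma~\ref{lemma:dballupperbound} with $d=\sqrt 2$. The bookkeeping you carry out---splitting $H^{1/2}$ into the $\sqrt{I\log I}$ piece and the $\sqrt{I\log(1/u)}$ piece and showing the resulting integral is $o(\epsilon\sqrt{I\log I\,\log n})$ for $I,n$ large---is exactly the intended verification.
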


\begin{proof}
 See \cite{LW2014} proof of Corollary 3.1.
\end{proof}
\begin{remark}
 Since the metric entropy decreases as $\epsilon$
increases, this lemma also holds for any $\epsilon \geq
\delta_{n,I}$. This property is quite useful in the proof of the main
theorem.
\end{remark}
\section{Lower bound of the denominator}
\label{sec:lowerbd}
In this section, we study how the prior distribution
concentrates on the shrinking neighborhoods around the true density
function. This is the key to bounding the denominator of
(\ref{eq:posteriorprob}) from below. We develop our results
through a series of lemmas. The connection between the lower bounds of the items
in the denominator of (\ref{eq:posteriorprob}) and the concentration rate
of the prior distribution is first derived (\ref{lemma:shen2001}). By employing a property of Dirichlet
distribution (Lemma {\ref{lemma:dirichlet}) and inequalities
bounding Kullback-Leibler divergence by Hellinger distance (Lemma
  \ref{lemma:hellingertokl}), we obtain lower bounds of the terms
  in the denominator of (\ref{eq:posteriorprob}) in Lemma
  \ref{lemma:lowerbd}.

To begin with, we cite a result from \cite{shen2001}. In this lemma,
it is shown that with probability close to 1, the
denominator is bounded from below by the prior probability mass
concentrating on a ball around $f_0$ multiplied by a coefficient
depending on the radius of the ball. 
\begin{lemma}[Shen and Wasserman (2001) Lemma 1] 
\label{lemma:shen2001}
Let $K(\cdot, \cdot)$ and $V(\cdot, \cdot)$ be as defined in
(\ref{eq:kl}) and (\ref{eq:varlog}), and let $S(t) = \{ f \in \Omega : K(f_0, f) \leq t, V(f_0, f) \leq t \}
$. Set $S_n =S(t_n)$. When $t_n$ is a sequence of positive numbers satisfying $nt_n
\rightarrow \infty$,
\begin{equation*}
  \mathbb{P}_{f_0}^n \left( \int_{\Omega} \prod_{j=1}^n \frac{f
    (Y_i)}{f_0(Y_i)} d \Pi(f) \leq \frac{1}{2} \Pi (S_n) e^{-2nt_n}
  \right) \leq \frac{2}{nt_n}.
\end{equation*}
\end{lemma}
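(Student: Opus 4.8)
The plan is to reduce the desired lower bound on $\int_\Omega \prod_{j=1}^n \frac{f(Y_j)}{f_0(Y_j)}\,d\Pi(f)$ to a concentration statement about the average (over $\Pi$ restricted to $S_n$) of the log-likelihood ratio, and then control that average through Chebyshev's inequality. First I would throw away everything outside $S_n$, so that $\int_\Omega \prod_j \frac{f(Y_j)}{f_0(Y_j)}\,d\Pi(f) \ge \Pi(S_n)\int_{S_n}\prod_j \frac{f(Y_j)}{f_0(Y_j)}\,\frac{d\Pi(f)}{\Pi(S_n)}$. Applying Jensen's inequality to the concave map $\log$ with respect to the probability measure $d\Pi(\cdot)/\Pi(S_n)$ supported on $S_n$ gives
\[
\log\!\int_{S_n}\prod_{j=1}^n \frac{f(Y_j)}{f_0(Y_j)}\,\frac{d\Pi(f)}{\Pi(S_n)} \;\ge\; Z_n := \frac{1}{\Pi(S_n)}\int_{S_n}\sum_{j=1}^n \log\frac{f(Y_j)}{f_0(Y_j)}\,d\Pi(f),
\]
hence $\int_\Omega \prod_j \frac{f(Y_j)}{f_0(Y_j)}\,d\Pi(f)\ge \Pi(S_n)\,e^{Z_n}$. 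It therefore suffices to prove $P_{f_0}^n(Z_n\le -2nt_n)\le 2/(nt_n)$, since on the complement the integral is at least $\Pi(S_n)e^{-2nt_n}\ge \tfrac12\Pi(S_n)e^{-2nt_n}$.

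Second, I would compute the first two moments of $Z_n$ under $P_{f_0}^n$. Set $W_j(f)=\log\frac{f(Y_j)}{f_0(Y_j)}$ and $\bar W_j=\Pi(S_n)^{-1}\int_{S_n}W_j(f)\,d\Pi(f)$; then $Z_n=\sum_{j=1}^n \bar W_j$, and $\bar W_1,\dots,\bar W_n$ are i.i.d.\ because $Y_1,\dots,Y_n$ are. Since $\mathbb{E}_{f_0}W_1(f)=-K(f_0,f)$ and $\mathrm{Var}_{f_0}W_1(f)=V(f_0,f)$, Fubini gives $\mathbb{E}[\bar W_1]=-\Pi(S_n)^{-1}\int_{S_n}K(f_0,f)\,d\Pi(f)\in[-t_n,0]$ (using $0\le K(f_0,f)\le t_n$ on $S_n$), so $\mathbb{E}[Z_n]\ge -nt_n$. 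For the variance, Cauchy--Schwarz (equivalently Jensen for $x\mapsto x^2$) with respect to $d\Pi/\Pi(S_n)$ lets me move the square inside the integral: $(\bar W_1-\mathbb{E}\bar W_1)^2\le \Pi(S_n)^{-1}\int_{S_n}(W_1(f)-\mathbb{E}W_1(f))^2\,d\Pi(f)$, and taking expectations yields $\mathrm{Var}(\bar W_1)\le \Pi(S_n)^{-1}\int_{S_n}V(f_0,f)\,d\Pi(f)\le t_n$, whence $\mathrm{Var}(Z_n)=n\,\mathrm{Var}(\bar W_1)\le nt_n$.

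Finally I would invoke Chebyshev's inequality. Because $\mathbb{E}[Z_n]\ge -nt_n$, the event $\{Z_n\le -2nt_n\}$ is contained in $\{Z_n-\mathbb{E}Z_n\le -nt_n\}\subseteq\{|Z_n-\mathbb{E}Z_n|\ge nt_n\}$, so $P_{f_0}^n(Z_n\le -2nt_n)\le \mathrm{Var}(Z_n)/(nt_n)^2\le 1/(nt_n)\le 2/(nt_n)$, which combined with the first step proves the lemma; the hypothesis $nt_n\to\infty$ is exactly what makes this exceptional probability tend to $0$. The only genuinely delicate point is the variance bound: one has to justify interchanging the $L^2(P_{f_0}^n)$ operation $\mathrm{Var}(\cdot)$ with the $\Pi$-integral over $S_n$, which is where the Cauchy--Schwarz inequality together with the finiteness of $V(f_0,f)$ on $S_n$ (ensuring the relevant functions are square-integrable so Fubini applies) do the work; all remaining steps are elementary bookkeeping.
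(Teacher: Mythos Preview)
Your proof is correct and is precisely the argument given by Shen and Wasserman (2001), which the present paper merely cites without reproducing a proof. The restriction to $S_n$, Jensen's inequality to pass to the averaged log-likelihood ratio $Z_n$, the moment bounds $\mathbb{E}Z_n\ge -nt_n$ and $\mathrm{Var}(Z_n)\le nt_n$ via Fubini and Cauchy--Schwarz, followed by Chebyshev, are exactly the standard steps; there is nothing to add or correct.
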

More explicitly, from this lemma we learn that, given the condition $n
t_n \rightarrow \infty$, $\int_{\Omega} \prod_{j=1}^n \frac{f
    (Y_i)}{f_0(Y_i)} d \Pi(f) \geq \frac{1}{2} \Pi (S_n) e^{-2nt_n}$
  with probability close to 1.

It is well known that Hellinger distance can be bounded by the
Kullback-Leibler divergence. In \cite{wong1995}, they showed that
the other direction also holds under an integrability condition.
Their results are summarized in the lemma below.

\begin{lemma} [Wong and Shen (1995) Theorem 5]
  \label{lemma:hellingertokl}
  Let $f$, $f_0$ be two densities, $\rho^2(f, f_0) \leq
  \epsilon^2$. Suppose that $M_{\delta} ^2 = \int_{\{f_0/f \geq
    e^{1/\delta} \} } f_0 (f_0 /f)^{\delta} < \infty$ for some $\delta
  \in (0, 1]$. Then for all $\epsilon^2 \leq \frac{1}{2} (
  1-e^{-1})^2$, we have 
  \begin{eqnarray*}
    \int f_0 \log (\frac{f_0}{f}) \leq \big[ 6 + \frac{2 \log
      2}{(1-e^{-1})^2} + \frac{8}{\delta} \max \big( 1 , \log
    (\frac{M_{\delta}}{\epsilon}) \big) \big] \epsilon^2,\\
\int f_0 \big( \log (\frac{f_0}{f}) \big)^2 \leq 5\epsilon^2 \big[
\frac{1}{\delta} \max \big( 1, \log (\frac{M_{\delta}}{\epsilon} )
\big) \big]^2.
  \end{eqnarray*}
\end{lemma}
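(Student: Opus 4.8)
The plan is to reduce both inequalities to a single estimate on $\int f_0\,r(u)$, where $u:=\sqrt{f/f_0}$ and $r(t):=t-1-\log t$. Convexity of $-\log$ gives $r\ge 0$, and integrating $r'(t)=1-1/t$ yields the elementary bounds $r(t)\le (1-t)^2/2$ for $t\ge 1$, $r(t)\le (1-t)^2/(2t)$ for $0<t\le 1$, and the crude bound $r(t)\le\log(1/t)$ for $0<t\le 1$. First I would record the identity
\begin{equation*}
K(f_0,f)=\rho^2(f_0,f)+2\int f_0\,r(u),
\end{equation*}
which follows from $\log(f_0/f)=-2\log u=2(1-u)+2r(u)$ together with $\int f_0(1-u)=1-\int\sqrt{f_0 f}=\rho^2(f_0,f)/2$ (the last equality uses $\int f=\int f_0=1$). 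For the variance I would use the weaker but adequate bound $V(f_0,f)\le\int f_0\big(\log(f_0/f)\big)^2=4\int f_0(\log u)^2$ and estimate $\int f_0(\log u)^2$ exactly as $\int f_0 r(u)$; in both cases the integrand is controlled by $(1-u)^2$ where $u$ is bounded away from $0$ and by a power of $\log(1/u)$ where $u$ is small.

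The heart of the matter is to bound $\int f_0\,r(u)$ by $C\delta^{-1}\max(1,\log(M_\delta/\epsilon))\,\epsilon^2$. I would split the sample space at $u_\delta:=e^{-1/(2\delta)}$, chosen so that $\{u<u_\delta\}=\{f_0/f>e^{1/\delta}\}$ is exactly the set on which the hypothesis controls $\int f_0(f_0/f)^\delta=\int f_0 u^{-2\delta}\le M_\delta^2$. On $\{u\ge u_\delta\}$ (which contains $\{u\ge 1\}$) the ratio $r(u)/(1-u)^2$ is uniformly bounded---it equals $1/2$ in the limit $u\to 1$ and $0$ in the limit $u\to\infty$, and one checks its supremum over $[u_\delta,\infty)$ is attained near $u_\delta$, where it is $(u_\delta-1+\frac1{2\delta})/(1-u_\delta)^2=O(\delta^{-1})$ since $1-u_\delta$ is bounded away from $0$ for $\delta\le 1$; hence $\int_{\{u\ge u_\delta\}}f_0\,r(u)\le C\delta^{-1}\rho^2\le C\delta^{-1}\epsilon^2$.

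On $\{u<u_\delta\}$ I would use $r(u)\le\log(1/u)$ and split once more at a free threshold $c\in(0,u_\delta]$. On $\{c\le u<u_\delta\}$ one has $\log(1/u)\le\log(1/c)$, while Markov's inequality applied to $\rho^2=\int f_0(1-u)^2$ gives $f_0(\{u<u_\delta\})\le\epsilon^2/(1-u_\delta)^2$; this contributes $\lesssim\delta^{-1}\log(1/c)\,\epsilon^2$. On $\{u<c\}$ I would write $\log(1/u)=\log(1/c)+\log(c/u)\le\log(1/c)+\frac1{2\delta}\big((c/u)^{2\delta}-1\big)\le\log(1/c)+\frac{c^{2\delta}}{2\delta}u^{-2\delta}$, and since $\{u<c\}\subseteq\{u<u_\delta\}$ the heavy-tail bound gives $\int_{\{u<c\}}f_0 u^{-2\delta}\le M_\delta^2$, contributing $\lesssim\delta^{-1}\log(1/c)\,\epsilon^2+\delta^{-1}c^{2\delta}M_\delta^2$. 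Then I would optimize over $c$: when $M_\delta$ is not much smaller than $\epsilon$, the choice $c^{2\delta}\asymp\epsilon^2/M_\delta^2$ (still satisfying $c\le u_\delta$) balances the last two terms and gives $\asymp\delta^{-1}\log(M_\delta/\epsilon)\,\epsilon^2$; when $M_\delta$ is small one just takes $c=u_\delta$ and gets $\asymp\delta^{-1}\epsilon^2$. The two regimes combine into the factor $\max(1,\log(M_\delta/\epsilon))$.

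Assembling the pieces gives $K(f_0,f)=\rho^2+2\int f_0 r(u)\le\big[6+\frac{2\log 2}{(1-e^{-1})^2}+\frac{8}{\delta}\max(1,\log(M_\delta/\epsilon))\big]\epsilon^2$, where the hypothesis $\epsilon^2\le\frac12(1-e^{-1})^2$ is precisely what keeps the Markov denominators and the optimal $1-c$ bounded away from $0$ so that these explicit constants hold. The second-moment bound follows from the same two-region/threshold scheme with $r(u)$ replaced by $(\log u)^2$; the only new input is $(\log(1/u))^2\le 2(\log(1/c))^2+\frac{2c^{2\delta}}{\delta^2}u^{-2\delta}$ on $\{u<c\}$, whose optimization over $c$ produces the squared factor $\big(\frac1\delta\log(M_\delta/\epsilon)\big)^2\epsilon^2$ and hence $V(f_0,f)\le 5\epsilon^2\big[\frac1\delta\max(1,\log(M_\delta/\epsilon))\big]^2$. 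The step I expect to be most delicate is not any individual inequality but the bookkeeping: threading the $\delta$-dependence cleanly through the threshold optimization in both regimes, verifying that $\sup_{u\ge u_\delta}r(u)/(1-u)^2=O(\delta^{-1})$, and checking that the optimal $c$ stays below $u_\delta$ so that the integrability hypothesis is invoked exactly where it holds, all so that one lands on the stated numerical constants rather than on unspecified ones.
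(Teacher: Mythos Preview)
The paper does not prove this lemma at all: it is stated as a citation of Theorem~5 in Wong and Shen (1995), with no argument given. So there is nothing in the paper to compare your proposal against.

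That said, your outline is essentially the Wong--Shen argument. The decomposition $K(f_0,f)=\rho^2(f_0,f)+2\int f_0\,r(u)$ with $u=\sqrt{f/f_0}$ and $r(t)=t-1-\log t$, the split at $u_\delta=e^{-1/(2\delta)}$ so that $\{u<u_\delta\}$ is exactly the set where the moment hypothesis applies, the Markov bound $f_0(\{u<u_\delta\})\le\epsilon^2/(1-u_\delta)^2$, and the optimization over a secondary threshold $c$ to balance $\log(1/c)\,\epsilon^2$ against $c^{2\delta}M_\delta^2$---all of this is the structure of the original proof. One point to watch: your claim that $\sup_{u\ge u_\delta}r(u)/(1-u)^2=O(\delta^{-1})$ is correct, but it does \emph{not} follow from the crude bound $r(u)\le(1-u)^2/(2u)$ on $(0,1]$, since $1/(2u_\delta)=e^{1/(2\delta)}/2$ is exponential in $1/\delta$; you need to evaluate $r(u_\delta)/(1-u_\delta)^2$ directly (it equals $(u_\delta-1+\tfrac{1}{2\delta})/(1-u_\delta)^2\asymp\tfrac{1}{2\delta}$) and argue that $u\mapsto r(u)/(1-u)^2$ is decreasing on $(0,1)$. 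Apart from that bookkeeping, and the careful tracking of constants you already flagged, the plan is sound.
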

From the proceeding lemma, we learn that, if $\rho^2(f, f_0) \leq
\epsilon^2$, then 
\begin{equation*}
  \max \Big( K(f_0, f), \mathbb{E}_{f_0}\big( (\log \frac{f(Y)}{f_0(Y)}
  )^2 \big) \Big) = O \big( \epsilon^2 (\log
  \frac{M_{\delta}}{\epsilon} )^2 \big).
\end{equation*}
This further implies that, there exists a constant $L$, such
that
\begin{eqnarray}
  \label{eq:hellingertokl}
  \nonumber && \Big \{ f: \rho(f, f_0) \leq \frac{L \epsilon} { (\log
  \frac{M_{\delta}}{\epsilon})^2 } \Big\} \\
& \subset& \Big \{ f: K(f_0, f) \leq  \epsilon^2, \mathbb{E}_{f_0}
\big( (\log  \frac{f(Y)}{f_0(Y)} ) ^2\big) \leq  \epsilon^2 \Big \}.
\end{eqnarray}

This lemma allows us to work on a Hellinger ball instead of a
Kullback-Leibler one. The transition is necessary because it is more
straightforward to apply a property of the Dirichlet distribution to
estimate the probability mass on a Hellinger ball around the true density
function.  In the lemma below, this particular property of the
Dirichlet distribution is stated in terms of $L_1$ distance, which is
equivalent to the Hellinger distance. We want to point out that this lemma is a variation of Lemma 6.1 in \cite{ghosal00} and the proof is adapted from their
paper.

\begin{lemma}
\label{lemma:dirichlet}
  Let $(X_1, \cdots, X_I)$ be distributed according to the truncated
  Dirichlet distribution (\ref{eq:prioronweight}) with truncation parameter $\tau$. Let $(x_{10}, \cdots,
  x_{I0})$ be any point on the $I$-simplex. Let $\epsilon <
  1/I$. Assume that $\tau < \epsilon^2$. Then 
  \begin{equation}
\label{eq:dir}
    P \Big( \sum_{i=1}^I |X_i -x_{i0} | \leq 2\epsilon \Big) \geq
    \frac{\Gamma(\alpha I)}{ (\Gamma(\alpha))^I} (\epsilon^2 - \tau)^I.
  \end{equation}
\end{lemma}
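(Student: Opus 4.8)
The plan is to reduce (\ref{eq:dir}) to a Lebesgue--volume estimate on the simplex and then to build an explicit box realizing it; this is the truncated version of the argument for Lemma 6.1 of \cite{ghosal00}. Parametrize the simplex by $(x_1,\dots,x_{I-1})$ with $x_I=1-\sum_{i<I}x_i$, let $\lambda$ be $(I-1)$-dimensional Lebesgue measure in these coordinates, and set $T_\tau=\{x:\ x_i>\tau\ \forall i\}$. By (\ref{eq:prioronweight}) the density of $(X_1,\dots,X_I)$ with respect to $\lambda$ is $g(x)=c_\tau\,\frac{\Gamma(\alpha I)}{(\Gamma(\alpha))^I}\prod_{i=1}^I x_i^{\alpha-1}\,\mathbbm{1}\{x\in T_\tau\}$, where $c_\tau$ is the normalizing constant. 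Since $\int_{T_\tau}\prod_i x_i^{\alpha-1}\,d\lambda\le\int\prod_i x_i^{\alpha-1}\,d\lambda=(\Gamma(\alpha))^I/\Gamma(\alpha I)$ (the Dirichlet integral over the whole simplex), we have $c_\tau\ge1$; and since $\alpha<1$ while every coordinate of a point of $T_\tau$ lies strictly between $\tau$ and $1$, each factor $x_i^{\alpha-1}\ge1$. Hence $g\ge\Gamma(\alpha I)/(\Gamma(\alpha))^I$ on $T_\tau$, so for every measurable $A\subseteq T_\tau$,
\[
 P(X\in A)\ \ge\ \frac{\Gamma(\alpha I)}{(\Gamma(\alpha))^I}\,\lambda(A).
\]
It therefore suffices to exhibit such an $A$ inside $\{x:\ \sum_i|x_i-x_{i0}|\le2\epsilon\}$ with $\lambda(A)\ge(\epsilon^2-\tau)^I$.

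For the box, relabel the coordinates so that $x_{I0}=\max_i x_{i0}$, so $x_{I0}\ge1/I$. Put $\eta=\epsilon^2-\tau>0$ and let $A$ be the box in which each $x_i$ with $i<I$ runs over $[\,\max\{x_{i0},\tau\},\ \max\{x_{i0},\tau\}+\eta\,]$, with $x_I=1-\sum_{i<I}x_i$ and the $\lambda$-null set $\{x_i=\tau\}$ removed; then $\lambda(A)=\eta^{\,I-1}\ge(\epsilon^2-\tau)^I$ since $\epsilon^2-\tau<1$. Each $x_i$ ($i<I$) has $x_i>\tau$ by construction and $0\le x_i-x_{i0}\le\tau+\eta=\epsilon^2$, so $D:=\sum_{i<I}(x_i-x_{i0})\in[0,(I-1)\epsilon^2]$; hence $x_I=x_{I0}-D\ge 1/I-(I-1)\epsilon^2>1/I^2>\tau$, where the two strict inequalities are exactly $\epsilon<1/I$ and $\tau<\epsilon^2$. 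Thus $A\subseteq T_\tau$. Finally $D\ge0$ forces $x_I\le x_{I0}$, so $\sum_{i=1}^I(x_i-x_{i0})^+=D\le(I-1)\epsilon^2<\epsilon$, which is the same as $\sum_i|x_i-x_{i0}|\le2\epsilon$ because $\sum_i(x_i-x_{i0})=0$. Combining with the first paragraph yields (\ref{eq:dir}).

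I expect the main obstacle to be this second step rather than the density bound. The point $x_0$ is an arbitrary point of the simplex, so all but one of its coordinates may lie below $\tau$ (possibly equal to $0$), and the box $A$ must simultaneously stay inside $T_\tau$, be consistent with $\sum_i x_i=1$, and keep the $L_1$-excursion from $x_0$ within $2\epsilon$, all while losing no more volume than the target $(\epsilon^2-\tau)^I$ permits. Relabelling so that $x_{I0}$ is the largest coordinate and taking the box side to be precisely $\epsilon^2-\tau$ are what make the constants close, and this is where the hypotheses $\epsilon<1/I$ and $\tau<\epsilon^2$ are used in an essential way. By contrast, Step~1 is routine: it only uses $\alpha<1$, the boundedness of the coordinates, and the comparison of the truncated normalizing integral with the untruncated Dirichlet integral.
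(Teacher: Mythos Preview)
Your proof is correct and follows essentially the same approach as the paper's: both relabel so the largest coordinate is last, lower bound the truncated Dirichlet density by $\Gamma(\alpha I)/(\Gamma(\alpha))^I$ using $\alpha<1$, and construct a box of side $\epsilon^2-\tau$ in the first $I-1$ coordinates that sits inside both $T_\tau$ and the $L_1$-ball. Your version is slightly more explicit (you justify $c_\tau\ge1$ and use a one-sided box $[\max\{x_{i0},\tau\},\max\{x_{i0},\tau\}+\eta]$ rather than the paper's two-sided interval $[\max(x_{i0}-\epsilon^2,\tau),\min(x_{i0}+\epsilon^2,1)]$), but the argument is the same adaptation of Lemma~6.1 of \cite{ghosal00}.
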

\begin{proof}
  We can find an index $i$ such that $x_{i0} > 1/I$. By relabeling, we
  can assume that $i=I$. if $|x_i -x_{i0} | \leq \epsilon^2$ for $i =
  1, \cdots, I-1$, then 
  \begin{equation*}
    \sum_{i=1}^{I-1} x_i \leq 1 - x_{I0} + (I-1) \epsilon^2 \leq
    (I-1)(\epsilon^2 + 1/I) \leq 1- \epsilon^2 <1.
  \end{equation*}

Therefore, there exists $x=(x_1, \cdots, x_I)$ in the simplex with
these first $I-1$ coordinates. And
\begin{equation*}
  \sum_{i=1}^I |x_i - x_{i0}| \leq 2 \sum_{i=1}^{I-1} |x_i -x_{i0}|
  \leq 2\epsilon^2 (I-1) \leq 2\epsilon.
\end{equation*}
Therefore, the probability on the left hand side of (\ref{eq:dir}) is
bounded below by
\begin{eqnarray*}
  && P( |X_i -x_{i0}| \leq \epsilon^2, i =1, \cdots, I-1)\\
&\geq& \frac{\Gamma(\alpha I)}{ (\Gamma(\alpha))^I} \prod_{i=1}^{I-1}
\int_{max((x_{i0}- \epsilon^2), \tau)} ^{min((x_{i0}+\epsilon^2) , 1)}
x_i^{\alpha-1} dx_i (1-\tau).
\end{eqnarray*}
Since $\alpha <1$, we can lower bound the integrand by $1$ and the
interval of integration contains at least an interval of length
$\epsilon^2-\tau$. Therefore, the result above can be further lower
bounded by 
\begin{equation*}
  \frac{\Gamma(\alpha I)}{ (\Gamma(\alpha))^I} (\epsilon^2
  -\tau)^{I-1} (1-\tau) \geq \frac{\Gamma(\alpha I)}{ (\Gamma(\alpha))^I} (\epsilon^2
  -\tau)^I.
\end{equation*}
This finishes the proof. 
\end{proof}

Now, we are ready to derive lower bounds for the prior probability
mass on $\Theta_I$'s when $I$ varies within a certain range. Before
stating the result, we want to briefly review the assumptions we made
in Section \ref{subsec:approx} and Section
\ref{subsec:prior}. First, in terms of approximation error, we
assume that for any $f_0 \in \mathcal{F}_0$, there exists a
sequence of $f_I  \in \Theta_I$, such that $A_1 I^{-r} \leq \min_{g
  \in \Theta_I} \rho(g, f) \leq \rho(f_I,
f) \leq A_2 I^{-r}$ for some positive constants $A_1$ and $A_2$. Second, we imposed a moment condition on
$\mathcal{F}_0$. For any $f \in \mathcal{F}_0$, we assume that $\int f^{2} <
\infty$. At last, given a partition of size $I$, the weights on the
subregions within the partition follow a Dirichlet
distribution truncated from below, with the truncation parameter $\tau = D
I^{-\kappa}$ ($D, \kappa >0$). Under these three assumptions, we will derive the lower bound
in the lemma below.

\begin{lemma}
  \label{lemma:lowerbd}
Assume that $f_0 \in \mathcal{F}_0$. $\Pi$ is the prior probability
specified in Section \ref{subsec:prior}, with $\kappa > \max( 2, 4r)$. Let $t_{n,I} =
\epsilon_{n,I}^2= \frac{I \log I}
{n/ \log n}$. When
$I = n^{\frac{1}{2r+1}}$, we have
\begin{eqnarray*}
 &&\mathbb{P}_{f_0}^n \Big( \int_{\Theta_I} \prod_{j=1}^n \frac{f
    (Y_i)}{f_0(Y_i)} d \Pi(f) \\
&& \ \ \ \ \ \ \ \leq \frac{1}{2} \Pi (\Theta_I)
  \exp (-2nt_{n,I}-c^* I \log I -4\omega I \log n -I \log \Gamma(\alpha))
  \Big) \\
&\leq& \frac{2}{nt_{n,I}},
\end{eqnarray*}
where $\omega= \max(1, 1/2r)$.
\end{lemma}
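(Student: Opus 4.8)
The plan is to apply the Shen--Wasserman estimate (Lemma~\ref{lemma:shen2001}) to the conditional prior $\Pi(\cdot\mid\Theta_I)$ rather than to $\Pi$ itself, with $t_n=t_{n,I}$ and $S_n=S(t_{n,I})$. Since $nt_{n,I}=I(\log I)(\log n)\to\infty$, that lemma yields, on an event of $P_{f_0}^n$-probability at least $1-2/(nt_{n,I})$,
\begin{equation*}
\int_{\Theta_I}\prod_{j=1}^{n}\frac{f(Y_j)}{f_0(Y_j)}\,d\Pi(f)\ \ge\ \tfrac12\,\Pi(\Theta_I)\,\Pi(S_n\mid\Theta_I)\,e^{-2nt_{n,I}},
\end{equation*}
so the claim follows once the deterministic bound $\Pi(S_n\mid\Theta_I)\ge\exp(-c^*I\log I-4\omega I\log n-I\log\Gamma(\alpha))$ is established.

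To prove that bound I would first replace the Kullback--Leibler/variance ball $S_n$ by a Hellinger ball. Every $f\in\Theta_I$ obeys $f\ge 2\tau=2DI^{-\kappa}$, since each cell of a binary partition of size $I\ge2$ has volume at most $\tfrac12$; together with the moment assumption $\int f_0^2<\infty$ this gives $M_\delta\lesssim I^{\kappa/2}$ for $\delta=1$, uniformly over $\Theta_I$. Feeding this into Lemma~\ref{lemma:hellingertokl} and~(\ref{eq:hellingertokl}), and exploiting that $t_{n,I}\asymp(\log n)^2I^{-2r}$ carries an extra $\log n$ factor beyond its ``natural'' size $I^{-2r}\log I$---enough to absorb the squared-logarithm loss in the variance estimate---one checks that for $n$ large
\begin{equation*}
\{\,f\in\Theta_I:\rho(f,f_0)\le c_2I^{-r}\,\}\subset S_n
\end{equation*}
for a suitable constant $c_2>0$. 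Here ``$I=n^{1/(2r+1)}$'' should be read as ``$I$ equal to a fixed constant multiple of $n^{1/(2r+1)}$'', the constant depending on $A_2,\kappa,r,\int f_0^2$: enlarging $I$ by a constant factor inflates $t_{n,I}$ relative to $I^{-2r}$, which allows $c_2>A_2$, so that the best size-$I$ approximant $f_I$ (satisfying $\rho(f_I,f_0)\le A_2I^{-r}$) lies comfortably inside the ball.

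It then remains to bound from below the conditional prior mass of $\{\rho(\cdot,f_0)\le c_2I^{-r}\}$. Keeping only the one binary partition $\mathcal P^{*}$ that supports $f_I$ costs a factor $1/T_I\ge e^{-c^*I\log I}$, since $\log T_I\le c^*I\log I$ and the prior is uniform over the partitions inside $\Theta_I$. On $\mathcal P^{*}$, with $\theta_i^{*}=\int_{\Omega_i^{*}}f_I$ and the elementary bound $\rho^2(f,f_I)=\sum_i(\sqrt{\theta_i}-\sqrt{\theta_i^{*}})^2\le\sum_i|\theta_i-\theta_i^{*}|$, any weight vector within $\ell_1$-distance $2\eta$ of $\theta^{*}$ yields a density within Hellinger distance $\sqrt{2\eta}+A_2I^{-r}\le c_2I^{-r}$ of $f_0$ once $\eta$ is taken of order $\min(I^{-2r},I^{-1})$. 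Lemma~\ref{lemma:dirichlet} applies with this $\eta$ because $\kappa>\max(2,4r)$ forces both $\tau=DI^{-\kappa}<\eta^2$ and $\eta<1/I$, and it delivers
\begin{equation*}
\Pi\big(\{\rho(\cdot,f_0)\le c_2I^{-r}\}\mid\mathcal F(\mathcal P^{*})\big)\ \ge\ \frac{\Gamma(\alpha I)}{\Gamma(\alpha)^I}(\eta^2-\tau)^I\ \ge\ \Gamma(\alpha)^{-I}(\eta^2-\tau)^I,
\end{equation*}
using $\Gamma(\alpha I)\ge1$ for $I$ large. As $\omega=\max(1,1/2r)$ is large enough that $(\eta^2-\tau)^I\ge e^{-4\omega I\log n}$, multiplying the three factors $e^{-c^*I\log I}$, $\Gamma(\alpha)^{-I}$ and $(\eta^2-\tau)^I$ gives the claimed lower bound on $\Pi(S_n\mid\Theta_I)$, and substituting it into the Shen--Wasserman inequality finishes the proof.

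The hard part will be the second paragraph: arranging a single Hellinger ball around $f_0$ that is simultaneously wide enough for the truncated Dirichlet to charge it with the right exponential weight, narrow enough---given the $I$-dependent growth of $M_\delta$---to sit inside $S_n$, and compatible with both the truncation level $\tau=DI^{-\kappa}$ and the constraint $\epsilon<1/I$ in Lemma~\ref{lemma:dirichlet}. It is exactly this three-way tension that forces the hypothesis $\kappa>\max(2,4r)$ and the extra logarithmic factor built into $t_{n,I}$, and ultimately into $\epsilon_n$.
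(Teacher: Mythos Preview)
Your proposal is correct and follows essentially the same route as the paper: apply Shen--Wasserman (Lemma~\ref{lemma:shen2001}), pass from the $K/V$-ball to a Hellinger ball via Lemma~\ref{lemma:hellingertokl} with $M_\delta$ controlled through the truncation $\tau=DI^{-\kappa}$, restrict to the single partition supporting $f_I$ at cost $e^{-c^*I\log I}$, convert Hellinger to $L_1$ via $\rho^2\le\|\cdot\|_{L_1}$, and invoke the truncated-Dirichlet estimate (Lemma~\ref{lemma:dirichlet}). The only cosmetic difference is bookkeeping: you parametrise the radii directly in powers of $I$ (the Hellinger radius $c_2I^{-r}$ and the $L_1$ radius $\eta\asymp\min(I^{-2r},I^{-1})$), whereas the paper keeps them as $\bigl(\frac{I\log I}{n\log n}\bigr)^{1/2}$ and $\bigl(\frac{I\log I}{n\log n}\bigr)^{\omega}$; after substituting $I=n^{1/(2r+1)}$ these coincide, and in particular your $\min(I^{-2r},I^{-1})=I^{-2r\omega}$ is exactly the paper's choice. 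Your explicit remark that ``$I=n^{1/(2r+1)}$'' must be read up to a constant factor so that $c_2>A_2$ (hence $f_I$ actually sits inside the Hellinger ball) is a point the paper's proof leaves implicit when it asserts $\rho(f_0,f_I)<\frac{L\epsilon_{n,I}}{\log(M_\delta/\epsilon_{n,I})}$ without tracking constants.
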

\begin{proof}
Let $S_{n,I} = \{ f \in
\Theta_I : K(f_0, f) \leq t_{n,I}, V(f_0, f) \leq t_{n,I} \}$. From lemma \ref{lemma:shen2001}, we have the bound 
\begin{equation}
  \label{eq:lowerbd1}
  \mathbb{P}_{f_0}^n \Big( \int_{\Theta_I} \prod_{j=1}^n \frac{f
    (Y_i)}{f_0(Y_i)} d \Pi(f) \leq \frac{1}{2} \Pi (S_{n,I})
  e^{-2nt_{n,I}} \Big) \leq \frac{2}{nt_{n,I}}.
\end{equation}
Next step, we will search a lower bound for $\Pi(S_{n,I})$. The way to
approach this is to find a subset of $S_{n,I}$ to which we can apply
Lemma \ref{lemma:dirichlet}. Our argument is as the following.

  Define $\tilde{S}_{n,I} = \{ f \in \Theta_I : K(f_0, f) \leq
  t_{n,I}, \mathbb{E}_{f_0}\big( ( \log \frac{f_0(Y)}{f(Y)})^2 \big) \leq t_{n,I}
  \}$. Note that $\mathbb{E}_{f_0}\big( ( \log \frac{f_0(Y)}{f(Y)})^2 \big) \geq V(f_0, f)$, we have $\tilde{S}_{n,I} \subset S_{n,I}$. 
From (\ref{eq:hellingertokl}), we know that 
\begin{equation*}
  W_{n,I} := \{ f \in \Theta_I: \rho(f_0, f) \leq \frac{ L
    \epsilon_{n,I} }{ \log \frac{M_{\delta}} {\epsilon_{n,I}} }  \} \subset \tilde{S}_{n,I}.
\end{equation*}
With the truncation parameter $\tau= D I^{-\kappa}$, $M_{\delta} =
O ( I^{\delta \kappa} \int f_0 ^{(1+\delta)}
)$. Furthermore, 
\begin{eqnarray}
\label{eq:radius}
 \nonumber \frac{ \epsilon_{n,I} }{ \log \frac{M_{\delta}}
   {\epsilon_{n,I}} } &=& O \left( \frac{ \left( \frac{I \log I}{n / \log n}
    \right) ^ {1/2} }{  \log \left(I^{\delta \kappa} \int f_0
    ^{(1+\delta)}  (\frac{n / \log n}{I \log I})^{1/2} \right)  }
  \right)\\
&=& O \left( \left(\frac{I \log I}{ n \log n }\right)^{1/2} \right).
\end{eqnarray}
Under the assumptions that $I= n^{ \frac{1}{1+2r}}$, there exists $f_I \in \Theta_I$, such
that $\rho( f_0, f_I) < \frac{ L
    \epsilon_{n,I} }{ \log \frac{M_{\delta}} {\epsilon_{n,I}} }$. 
If we define
\begin{equation*}
  \tilde{W}_{n,I} := \{ f\in \Theta_I: \rho (f, f_I) \leq  \frac{ L
    \epsilon_{n,I} }{ \log \frac{M_{\delta}} {\epsilon_{n,I}} }- \rho(f_0, f_I) \},
\end{equation*}
by triangle inequality, we know that $\tilde{W}_{n,I} \subset
W_{n,I}$. Together with the previous result, we claim that there exists a constant
$L'$, such that 
  \begin{equation*}
    \tilde{B}_{n,I} := \{ f\in \Theta_I: \rho (f, f_I) \leq L'
    \left(\frac{I \log I }{ n \log n }\right)^{1/2} \} \subset \tilde{W}_{n,I},
  \end{equation*}
Next, from the fact $ \rho^2(f, g) \leq \|f
-g \|_{L_1}$, we have
\begin{equation*}
  B_{n,I} := \{ f\in \Theta_I: \|f_I - f\|_{L_1}  \leq  \frac{
    L'^2 I \log I}{ n \log n } \} \subset \tilde{B}_{n,I}.
\end{equation*}
Note that $\Pi(B_{n,I} ) = \Pi(\Theta_I )
  \Pi(B_{n,I} | \Theta_I)$. Assume that $f_I$ is supported by the binary partition
$\{ \Omega_{i0} \}_{i=1}^I$. Let $F_0 = \{ f \in \Theta_I: f =
  \sum_{i=1}^I \beta_i \mathbbm{1}_{\Omega_{i0}}, \beta_i \geq 0, \sum_{i=1}^I \beta_i =1 \}$ be the collection of all the density
  functions in $\Theta_I$ which are supported by the same binary
  partition as $f_I$. Then 
  \begin{equation}
    \label{eq:lowerbd2}
    \Pi(B_{n,I} | \Theta_I) \geq
   \Pi(B_{n,I} | F_0) \Pi(F_0 | \Theta_I) \geq \exp ( -c^* I \log I)
  \Pi(B_{n,I} | F_0) .
  \end{equation}
Now we apply Lemma \ref{lemma:dirichlet} to bound $\Pi(B_{n,I} | F_0)$
from below. We will works with an $L_1$-ball with radius $( \frac{
    L'^2 I \log I }{ n \log n } )^{\omega}$, where $\omega$ is
  chosen to be $\max(1, 1/2r) $. We can always assume that $L' <1$,
  otherwise we can work with a smaller ball instead. Obviously, this ball
  is contained in $B_{n,I}$. When $I= n^{\frac{1}{2r+1}}$, we
  have $( \frac{
    L'^2 I \log I }{ n \log n} )^{\omega} < \frac{1}{I}$. Under
  the assumptions $\kappa > \max(2, 4r)$, we
know that when $I \geq n^{\gamma_1}$, $ D I^{-\kappa} =
o( (\frac{I \log I}{n \log n })^{2 \omega} )$. By setting
$x_{i0}$ in the lemma to probability mass
  on $\Omega_{i0}$ under $f_I$, we have 
  \begin{eqnarray}
    \label{eq:lowerbd3}
 \nonumber   \Pi(B_{n,I} | F_0 ) &\geq& \frac{\Gamma(\alpha I)
    }{(\Gamma(\alpha))^I} (  (\frac{L'^2 I \log I}{2n \log n })^{2 \omega}  -DI^{-\kappa} )^I \\
&\geq& \exp( -I \log \Gamma(\alpha) -4 \omega I \log n).
  \end{eqnarray}
Combine (\ref{eq:lowerbd1}), (\ref{eq:lowerbd2}) and
(\ref{eq:lowerbd3}) together, we get the desired result.
\end{proof}

\section{Proof of Theorem \ref{thm:concentrationrate}}
\label{sec:finalproof}
In this section, we will combine the upper bound in Section
\ref{sec:upperbd} and the lower bound in Section \ref{sec:lowerbd}
together to derive the posterior concentration rate.
\begin{proof}[Proof of Theorem \ref{thm:concentrationrate}]
  Let $\epsilon_n = n^{-\frac{r}{2r+1}} (\log n)^{2+ \frac{1}{2r}} $ and
  $\eta_{n,I} =  \left(  \frac{I (\log I)^{1/r+1}}{ n / \log
      n}\right)^{1/2} $. First, we divide the items in (\ref{eq:posteriorprob}) into three
  blocks. We define
  \begin{eqnarray*}
    \Rmnum{1}_{Num} = \sum_{I=1}^ {N_1 -1} \int_{ \{f: \rho(f, f_0) \geq M \epsilon_n \} \cap
     \Theta_I}  \prod_{j=1}^{n} \frac{f(Y_j)}{f_0(Y_j)} d \Pi(f), \\
\Rmnum{2}_{Num} =\sum_{I=N_1}^ {N_2} \int_{ \{f: \rho(f, f_0) \geq M \epsilon_n \} \cap
     \Theta_I}  \prod_{j=1}^{n} \frac{f(Y_j)}{f_0(Y_j)} d \Pi(f), \\
\Rmnum{3}_{Num} =\sum_{I=N_2+1}^ {n} \int_{ \{f: \rho(f, f_0) \geq M \epsilon_n \} \cap
     \Theta_I}  \prod_{j=1}^{n} \frac{f(Y_j)}{f_0(Y_j)} d \Pi(f), 
  \end{eqnarray*}
where $N_1 = n^{\frac{1}{2r+1}} (\log n)^{-\frac{1}{r}}$ and $N_2 =
  D n^{\frac{1}{2r+1}} (\log n)^2$. 

We deal with each block in the numerator separately. Roughly
speaking, when $I$ is small, the approximation error to $f_0$
dominates, and these items can be bounded by the Hellinger distance between $f$ and $f_0$. The items in the middle range can be
bounded by controlling the metric entropy of $\Theta_I$. The items in
the last block are negligible because the prior probability
decays to zero fast. 

We assume that there exists a
sequence of $f_I  \in \Theta_I$, such that $A_1 I^{-r} \leq \min_{g
  \in \Theta_I} \rho(g, f) \leq \rho(f_I,
f) \leq A_2 I^{-r}$ for some positive constants $A_1$ and $A_2$. When $I < N_1$, $A_1 I^{-r}$ is greater than
$\epsilon_n$. We can apply Lemma \ref{lemma:upperbd} by setting
$\delta_{n,I}$ to be $A_1 I^{-r}$. Therefore, as $n \rightarrow \infty$, 
\begin{eqnarray*}
  \Rmnum{1}_{Num} &\leq& \sum_{I=1}^{N_1 -1 } \Pi(\Theta_I) \exp (
  -A_1 n I^{-2r}) \\
&\leq& (\sum_{I=1}^{N_1 -1 } \exp ( -2A_1 n I^{-2r})) ^{1/2}.
\end{eqnarray*}
Now, we will estimate the order of the summation in the last line. In order to simplify
the notation, we will discuss the order of $\sum_{I=1}^{N_1 -1}
\exp(-\frac{2A_1 n}{I^{2r}})$ in detail.

We know that the mass is centered around $I= N_1 -1$. Power series
expansion around that point gives
\begin{equation*}
  \sum_{I=1}^{(1-\epsilon) N_1} \leq (1-\epsilon) N_1 \exp \left(
  - \frac{2A_1 n}{((1-\epsilon) N_1)^{2r}} \right),
\end{equation*}
which is a lower order term compared to the last term in the summation
and thus does not contribute significantly to the summation.
Let $1-\delta = \frac{I}{N_1}$, expand
\begin{equation*}
  (1-\delta)^{-2r} = 1+ 2r\delta + \binom{-2r}{2} \delta^2 + o (\delta^2).
\end{equation*}
\begin{eqnarray*}
  \sum_{I=(1-\epsilon) N_1}^{N_1 -1} \exp ( - \frac{2A_1 n}{I^{2r}})
& \leq & \int_{(1-\epsilon) N_1}^{N_1} \exp ( - \frac{2A_1 n}{x^{2r}}) dx \\
  &\sim& \int_0^\epsilon \exp\left (-2A_1 n^{\frac{1}{2r+1}} (\log n)^2
    (1-\delta)^{-2r} \right)
  N_1  d\delta \\
&\sim& \int_0^\epsilon \exp \left( -2A_1 n^{\frac{1}{2r+1}} (\log n)^2 ( 1+ 2r \delta
+o(\delta)) \right) N_1 d\delta \\
&\sim& \frac{1}{4rA_1 (\log n)^{1/r+2}} \exp \left( -2A_1 n^{\frac{1}{2r+1}}
(\log n)^2 \right).
\end{eqnarray*}
Therefore
\begin{equation}
  \label{eq:rate1}
  \Rmnum{1}_{Num} \leq (\log n)^{-1 -\frac{1}{2r}} \exp(-A_1
  n^{\frac{1}{2r+1}} (\log n)^2).
\end{equation}

From Lemma \ref{lemma:upperbd}, we know that if the result applies for
$\delta_{n,I}$, then it also applies to $M \eta_{n,I} >
\delta_{n,I}$. We have that when $N_1 \leq I \leq N_2$,
\begin{eqnarray}
  \label{eq:rate2}
 \nonumber  \Rmnum{2}_{Num} &\leq& \sum_{I=N_1}^ {N_2} \int_{ \{f: \rho(f, f_0) \geq M \eta_{n,I} \} \cap
     \Theta_I}  \prod_{j=1}^{n} \frac{f(Y_j)}{f_0(Y_j)} d \Pi(f) \\
\nonumber &\leq& \sum_{I= N_1} ^{N_2} \exp(
  -\lambda I \log I ) \exp( -M^2  I (\log I)^{1+\frac{1}{r}} \log n) \\
\nonumber &\leq& \left( \sum_{I=N_1}^{N_2} \exp(-2 \lambda I \log I) \right)^{1/2}
\left( \sum_{I=N_1}^{N_2}\exp \left( -2 M^2 I (\log I) ^{1+
      \frac{1}{r}} \log n \right) \right)^{1/2} \\
\nonumber &\sim& \exp \left(-M^2 n^{\frac{1}{2r+1}} (\log n)^2 \right),
\end{eqnarray}
where the last line is obtained by integration by part.

For $\Rmnum{3}_{Num}$, we have 
\begin{eqnarray}
\label{eq:num3}
 \nonumber \Rmnum{3}_{Num} &\leq& \sum_{I= N_2 +1} ^{n} \int_{\Theta_I}
  \prod_{j=1}^n \frac{f(Y_j)}{f_0(Y_j)} d \Pi (f) \\
&\sim& \exp\left(-n \int f_0 \log (f_0)\right) \sum_{I=N_2+1}^{n}
\int_{\Theta_I} \prod_{j=1}^n f(Y_j) d\Pi(f).
\end{eqnarray}
If we use $x_I$ to represent a partition of size $I$, and
$\mathcal{X}_I$ to denote the collection of all binary partitions of
size $I$, then the integral in (\ref{eq:num3}) can be divided into
the integral over each partition as the following:
\begin{eqnarray*}
  & &\Rmnum{3}_{Num}\\
 &\lesssim& \exp\left(-n \int f_0 \log (f_0)\right)
  \sum_{I=N_2+1}^{n} \sum_{x_I \in \mathcal{X}_I} 
  \int_{\theta_1, \ldots, \theta_I} \prod_{j=1}^n f(Y_j | \theta_1,
  \ldots, \theta_I, x_I) \\
&  &\times \Pi (\theta_1, \ldots, \theta_I | x_I ) \Pi (x_I)
d\theta_1 \ldots d\theta_I\\
&\lesssim& \frac{(\Gamma(\alpha))^I}{\Gamma(\alpha I)} (\frac{1}{4I^2}
- \tau)^{-I} \exp\left(-n \int f_0 \log (f_0)\right)\\
&&  \sum_{I=N_2+1}^{n} \frac{\exp(-\lambda I)}{T_I} 
\sum_{x_I \in \mathcal{X}_I}
  \frac{D(\alpha+n_1, \ldots, \alpha+n_I )}{ D(\alpha,
    \ldots, \alpha)} \prod_{i=1}^I \frac{1}{|\Omega_i|^{n_i}} ,
\end{eqnarray*}
where $\frac{(\Gamma(\alpha))^I}{\Gamma(\alpha I)} (\frac{1}{4I^2}
- \tau)^{-I}$ is an upper bound for the normalizing constant of the
truncated Dirichlet distribution. This inequality can be obtained from Lemma \ref{lemma:dirichlet}, because,
\begin{eqnarray*}
  &&\sum_{I=N_2+1}^{n} \sum_{x_I \in \mathcal{X}_I} 
  \int_{\theta_1, \ldots, \theta_I} \prod_{j=1}^n f(Y_j | \theta_1,
  \ldots, \theta_I, x_I) \Pi (\theta_1, \ldots, \theta_I | x_I ) \Pi (x_I)
d\theta_1 \ldots d\theta_I\\
&\leq& \frac{(\Gamma(\alpha))^I}{\Gamma(\alpha I)} (\frac{1}{4I^2}
- \tau)^{-I} \sum_{I=N_2+1}^{n} \sum_{x_I \in \mathcal{X}_I} \\
&&  \int_{\theta_1, \ldots, \theta_I} \prod_{j=1}^n f(Y_j | \theta_1,
  \ldots, \theta_I, x_I)  \mbox{Dir} (\theta_1, \ldots, \theta_{I-1}; \alpha,
  \ldots, \alpha| x_I ) \Pi (x_I)
d\theta_1 \ldots d\theta_I .
\end{eqnarray*}
Now, we focus on the part inside the summation, and apply Stirling's
approximation to the gamma function,
\begin{eqnarray}
\label{eq:num3inside}
\nonumber & & \frac{D(\alpha+n_1, \ldots, \alpha+n_I )}{ D(\alpha,
    \ldots, \alpha)} \prod_{i=1}^I \frac{1}{|\Omega_i|^{n_i}} \\
\nonumber &=& \exp \bigg( \log \Gamma(\alpha I) - I \log \Gamma(\alpha) +
  \sum_{i=1}^I \log \Gamma(\alpha +n_i) \\
\nonumber & & - \log \Gamma(\alpha I +n) +
  \sum_{i=1}^I n_i \log \frac{1}{|\Omega_i|} \bigg)\\
\nonumber &\lesssim& \exp\bigg(  \alpha I \log (\alpha I) - \alpha I - I \log
  \Gamma(\alpha) -(\alpha I +n) \log (\alpha I +n) +\alpha I +n \\
& & + \sum_{i=1}^I [ (\alpha +n_i) \log (\alpha +n_i) -(\alpha
  +n_i) +n_i \log \frac{1}{|\Omega_i|} ] \bigg) ,
\end{eqnarray}
Let $C(\alpha) = 1/ \Gamma(\alpha) -\alpha$, then 
\begin{eqnarray}
\label{eq:num3inside2}
 \nonumber && (\ref{eq:num3inside} ) \\
 &\lesssim& \exp\left( \alpha I \log \frac{\alpha I}{\alpha I +n} - n
   \log (\alpha I +n) + C(\alpha)I + \sum_{i=1}^I n_i \log
   \frac{n_i}{|\Omega_i|} \right).
\end{eqnarray}
Given a partition $\{\Omega_i\}_{i=1}^I$, define $\mu_i =
\int_{\Omega_i} f_0$, $\hat{\mu}_i = n_i /n$, and $\nu_i = \mu_i
/|\Omega_i|$. Then we
have $\sqrt{n}(\hat{\mu}_i - \mu_i) \rightarrow \mathcal{N} (0, \mu_i
(1-\mu_i))$ in distribution. With this result, 
\begin{eqnarray*}
  & &(\ref{eq:num3inside2} )\\
& =& \exp\left( \alpha I \log \frac{\alpha I}{\alpha I +n} - n
   \log \frac{\alpha I +n}{n} + C(\alpha)I +\sum_{i=1}^I n_i \log
   \frac{\hat{\mu}_i}{\mu_i} + \sum_{i=1}^I n_i \log \nu_i \right) \\
&=& \exp \bigg( \alpha I \log \frac{\alpha I}{\alpha I +n} - n
   \log \frac{\alpha I +n}{n} + C(\alpha)I + n \int f_0 \log (f_0) - n
   K (f_0, \hat{f}_{x_I}) \\
& & + \sum_{i=1}^I n_i \log
   \frac{\hat{\mu}_i}{\mu_i} + n \sum_{i=1}^I \log (\nu_i)
   (\hat{\mu}_i -\mu_i)  \bigg) \\
&\sim& \exp \left( \alpha I \log \frac{\alpha I}{\alpha I +n} - n
   \log \frac{\alpha I +n}{n} + C(\alpha)I + n \int f_0 \log (f_0) - n
   K (f_0, \hat{f}_{x_I}) \right).
\end{eqnarray*}
From this result, we know that no matter $ I
\ll n$ or $I$ is comparable to $n$, the integral over each partition
is bounded given that $\lambda$ is large enough. If we plug in this
result into the summation, we have 
\begin{eqnarray*}
  \Rmnum{3}_{Num} &\lesssim& \sum_{I=N_2}^{n} \exp (-I \log I) \\
&\leq& \exp(-D n^{\frac{1}{2r+1}} (\log n)^2 ).
\end{eqnarray*}
Therefore
\begin{eqnarray*}
& & (\ref{eq:posteriorprob}) \\
&\lesssim& \frac{ (\log n)^{-1 -\frac{1}{2r}} \exp ( -A_1 n^{\frac{1}{2r+1}}
(\log n)^2 ) + \exp (-M^2 n^{\frac{1}{2r+1}} (\log n)^2 + \exp(-D n^{\frac{1}{2r+1}} (\log n)^2 )
) } 
  { \sum_{I=1}^ {\infty} \int_{\Theta_I} \prod_{j=1}^n
   \frac{ f(Y_j)} {f_0(Y_j)} d \Pi(f) } \\
&\leq& \frac{ (\log n)^{-1 -\frac{1}{2r}} \exp ( -A_1 n^{\frac{1}{2r+1}}
(\log n)^2 ) + \exp (-M^2 n^{\frac{1}{2r+1}} (\log n)^2 ) + \exp(-D n^{\frac{1}{2r+1}} (\log n)^2 ) }  { \frac{1}{2} \exp\left( -
\frac{2}{2r+1} n^{\frac{1}{2r+1}} (\log n)^2 - (\frac{c^*}{2r+1} +
4\omega ) n^{\frac{1}{2r+1}} \log n - n^{\frac{1}{2r+1}} (\log
\Gamma(\alpha) +1)\right) },
\end{eqnarray*}
where the last inequality is obtained by applying Lemma \ref{lemma:lowerbd}
to the space $\Theta_I$ with $I= n^{\frac{1}{2r+1}}$. The last line
goes to zero when $A_1$, $M^2$ and $D$ are all greater than $\frac{2}{2r+1}$.

Therefore, we have 
\begin{equation*}
  \Pi \left( f: \rho(f, f_0) \geq M \epsilon_n|Y_1, \cdots, Y_n \right) \leq \exp\left( -b n^{\frac{1}{2r+1}} (\log n)^2 \right),
\end{equation*}
with probability tending to 1, where $b$ is a positive constant. This concludes the proof. 
\end{proof}

\appendix

\section*{Acknowledgements}
The authors would like to thank Bai Jiang for helpful discussions.

\bibliographystyle{imsart-nameyear}
\bibliography{ref} 

\end{document}